\let\eps=\varepsilon
\mathchardef\ordinarycolon\mathcode`\:
\let\cal=\mathcal
\let\bb=\mathbb
\newcommand{\F}{{\bb{F}}} 
\newcommand{\G}{{\bb{G}}} 
\newcommand{\N}{{\bb{N}}} 
\newcommand{\Q}{{\bb{Q}}} 
\newcommand{\R}{{\bb{R}}} 
\newcommand{\angs}[1]{{\left\langle #1\right\rangle}}
\newcommand{\abs}[1]{\left|#1\right|}
\newcommand{\ceil}[1]{{\left\lceil #1\right\rceil}}
\newcommand{\floor}[1]{{\left\lfloor #1\right\rfloor}}
\newtheorem*{theorem*}{Theorem}
\newtheorem{thm}{Theorem}[section]
\newtheorem*{thm*}{Theorem}
\newtheorem{lem}[thm]{Lemma}
\newtheorem*{lem*}{Lemma}
\newtheorem{cor}[thm]{Corollary}
\newtheorem*{cor*}{Corollary}
\newtheorem{conj}[thm]{Conjecture}
\newtheorem*{conj*}{Conjecture}
\newtheorem{prop}[thm]{Proposition}
\newtheorem{prop*}{Proposition}
\theoremstyle{definition}
\newtheorem{rmk}[thm]{Remark}
\newtheorem*{rmk*}{Remark}
\newtheorem*{obs*}{Observation}
\newtheorem*{prb*}{Problem}
\newtheorem*{q*}{Question}
\newtheorem{exmp}[thm]{Example}
\newtheorem*{exmp*}{Example}
\newtheorem*{exer*}{Exercise}
\newtheorem{defn}[thm]{Definition}
\newtheorem*{defn*}{Definition}
\DeclareMathOperator{\GL}{GL}
\DeclareMathOperator{\im}{Im}
\DeclareMathOperator{\id}{id}
\DeclareMathOperator{\rank}{rank}
\newcommand{\U}{\cal U}
\newcommand{\kk}{\mathbf{k}}
\newcommand{\M}{\cal M}
\newcommand{\EE}{\cal E}
\renewcommand{\G}{\mathbf{G}}
\renewcommand{\H}{\mathbf{H}}
\newcommand{\der}[2]{ \frac{d#1}{d#2} }
\DeclareMathOperator{\diag}{diag}
\DeclareMathOperator{\shape}{sh}
\DeclareMathOperator{\n}{\mathbf{n}}
\DeclareMathOperator{\cp}{cp}
\DeclareMathOperator{\comm}{Comm}
\newcommand\scalemath[2]{\scalebox{#1}{\mbox{\ensuremath{\displaystyle #2}}}}
\let\mid:
\title[Upper bounds on unitriangular conjugacy classes]{Upper bounds on the number of conjugacy classes in unitriangular groups}
\date{\today}
\author{Andrew Soffer}
\thanks{Department of Mathematics, UCLA, Los Angeles, CA 90095.\\
    \indent Email: \texttt{asoffer@math.ucla.edu}}
\numberwithin{equation}{section}
\begin{document}
\maketitle

\begin{abstract}
    We provide a new upper bound on the number of conjugacy classes in the group $U_n(q)$ of unitriangular matrices over a finite field.
    We also compute a similar upper bound for every group in the lower central series of~$U_n(q)$.
\end{abstract}

\section{Introduction}
For a prime power $q$, let $\F_q$ denote the field with $q$ elements.
Let $U_n(q)$ denote the group of upper-triangular matrices over $\F_q$ with 1s on the diagonal (called the \emph{unitriangular group}).
Let $\kk(G)$ denote the number of conjugacy classes in a finite group $G$.
In 1960, Higman proved the following \cite{higman1960enum1}:

\begin{thm}[Higman]\label{thm:higman_bound}
    For every prime power $q$, \[q^{\frac{n^2}{12}(1+o(1))}\le \kk(U_n(q))\le q^{\frac{n^2}{4}(1+o(1))},\]
    where $o(1)$ means a function of $n$, independent of $q$, which tends to zero as $n$ tends to infinity.
\end{thm}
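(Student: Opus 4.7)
The plan is to prove the two bounds of Theorem~\ref{thm:higman_bound} separately.

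\textbf{Upper bound.} I would use Burnside's orbit-counting lemma. Via the bijection $g\leftrightarrow X=g-I$, conjugacy classes of $U_n(q)$ correspond to orbits of $U_n(q)$ acting by conjugation on $\mathfrak{u}_n=\mathfrak{u}_n(\F_q)$ (strictly upper triangular matrices over $\F_q$). Since $I+X$ and $I+Y$ commute iff $[X,Y]=0$, the centralizer of $I+X$ in $U_n(q)$ has order exactly $q^{d(X)}$, where $d(X)=\dim_{\F_q}\ker(\ad X:\mathfrak{u}_n\to\mathfrak{u}_n)$. Burnside then gives
\[\kk(U_n(q)) \;=\; \frac{1}{q^{\binom{n}{2}}}\sum_{X\in\mathfrak{u}_n} q^{d(X)},\]
so the target reduces to showing $\sum_X q^{d(X)}\le q^{3n^2/4+o(n^2)}$. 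The strategy is to stratify $\mathfrak{u}_n$ by nilpotent Jordan type: since there are only $p(n)=e^{O(\sqrt n)}$ such types, the number of strata is absorbed into the error, and one is reduced to proving a uniform estimate $\dim(\mathcal{N}_\lambda\cap\mathfrak{u}_n)+d_\lambda \le 3n^2/4+o(n^2)$, where $d_\lambda$ is the maximum of $d(X)$ on the Jordan-type stratum $\mathcal{N}_\lambda\cap\mathfrak{u}_n$. Equivalently, for every $\lambda\vdash n$, the $U_n$-orbit space on the stratum has dimension at most $n^2/4+o(n^2)$.

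\textbf{Lower bound.} I would construct an explicit family $\mathcal{F}\subseteq U_n(q)$ of pairwise non-conjugate elements with $|\mathcal{F}|\ge q^{n^2/12(1+o(1))}$. The construction centers on matrices $I+X$ with $X$ supported in a block of positions of shape roughly $n/3\times n/3$ located so that the $U_n$-conjugation action on the block factors through a tractable subgroup action. Concretely, partitioning $n=n_1+n_2+n_3$ with each $n_i\approx n/3$ and taking $X$ supported in the top-right $n_1\times n_3$ block, a block computation shows $hXh^{-1}$ is determined by $h_{11}Xh_{33}^{-1}$, reducing to orbits of $U_{n/3}\times U_{n/3}$ acting on $(n/3)\times(n/3)$ matrices. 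One exhibits a family of normal forms (or a collection of explicit polynomial invariants of the entries, for instance certain minors) whose joint values distinguish at least $q^{n^2/12(1+o(1))}$ orbits.

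The main obstacle is the uniform stratum estimate in the upper bound: controlling both $\dim(\mathcal{N}_\lambda\cap\mathfrak{u}_n)$ and $d_\lambda$ simultaneously across all partitions $\lambda$ of $n$ reduces to a combinatorial optimization problem whose worst case must be identified. The lower bound is in principle elementary, but extracting the constant $1/12$ (rather than a smaller one like $1/18$ that a naive product-of-blocks construction yields) requires a careful choice of block geometry and a sharp count of invariants.
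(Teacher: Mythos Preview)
First, note that the paper does not prove Theorem~\ref{thm:higman_bound}; it is quoted as Higman's 1960 result and cited without proof. So there is no ``paper's own proof'' to compare against.

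Your upper-bound plan is sound and in fact is the skeleton of what the paper does (Sections~\ref{sec:gap}--\ref{sec:proof}) to obtain the \emph{stronger} exponents $1/6$ and then $7/44$. For the weaker $n^2/4$ bound, the ``main obstacle'' you flag is not really an obstacle: combining $d_\lambda\le \min\bigl(\binom{n}{2},\|\lambda'\|^2\bigr)$ with $\dim(\mathcal{N}_\lambda\cap\mathfrak{u}_n)\le \binom{n}{2}-\n(\lambda)=\tfrac12(n^2-\|\lambda'\|^2)$ already gives the required $\tfrac34n^2$ in both regimes $\|\lambda'\|^2\lessgtr \binom{n}{2}$. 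So that half of your proposal can be completed easily.

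Your lower-bound construction, however, does not work. With $m\approx n/3$ and $X$ supported only in the top-right $m\times m$ block, you correctly reduce to counting $U_m\times U_m$-orbits on $\M_{m\times m}$ under $(g,h)\cdot X=gXh^{-1}$. But this orbit set is far too small. Writing $A=g-I$, $B=h-I\in\U_m$, the fixed set of $(g,h)$ is $\ker T_{A,B}$, which by Lemma~\ref{lem:jordan_rank} has dimension $\angs{\lambda',\mu'}$ for $\shape(A)=\lambda$, $\shape(B)=\mu$. Burnside then gives
\[
\#\{\text{orbits}\}=\frac{1}{q^{m(m-1)}}\sum_{\lambda,\mu\vdash m}F_\lambda F_\mu\,q^{\angs{\lambda',\mu'}}.
\]
Using $F_\lambda\le f^\lambda q^{\binom{m}{2}-\n(\lambda)}$ and $\angs{\lambda',\mu'}\le\tfrac12(\|\lambda'\|^2+\|\mu'\|^2)=m+\n(\lambda)+\n(\mu)$, every summand is at most $f^\lambda f^\mu q^{m^2}$, so the orbit count is at most $\bigl(\sum_\lambda f^\lambda\bigr)^2 q^{m}=q^{O(m\log m)}$. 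Thus your family produces only $q^{o(n^2)}$ conjugacy classes, nowhere near $q^{n^2/12}$. No choice of minors or normal forms can rescue this, since the total number of orbits is already sub-exponential in $n^2$. The student even anticipates this in spirit (warning that a ``naive product-of-blocks'' gives $1/18$), but the single $n/3\times n/3$ block does worse still; to reach $1/12$ one needs a genuinely different construction, for instance working with a band of superdiagonals rather than a corner block, so that the relevant abelian section has dimension of order $n^2$ while the effective conjugating group is comparatively small.
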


Higman's original interest was in enumerating finite $p$-groups of a given order.
He obtained an upper bound for the number of groups of order $p^n$ in terms of~$\kk(U_n(p))$.
While the asymptotics of the number of $p$-groups has since been resolved via different methods (see \cite{blackburn2007enumeration}), the gap between the lower and upper bounds for~$\kk(U_n(q))$ has not been closed.

In their 1992 paper, Arregi and Vera-L\'opez used their technique of canonical matrices to improve Higman's upper bound \cite[Theorem~5.4]{vla1992conjugacy}.
They show \[\kk(U_n(q))\le(n-1)!2^{n-1} q^{\frac{n^2+n}{6}}.\]
Note that $(n-1)!2^{n-1}=q^{O(n\log n)}$, and so these terms do not contribute significantly to the asymptotics of $\kk(U_n(q))$.
Using the theory of supercharacters (see \cite{diaconis2008supercharacters, marberg2011combinatorial}), Marberg obtained an upper bound with the same asymptotics \cite[Theorem~5.1]{marberg2008constructing}.

We improve on these asymptotics, with the following result:
\begin{thm}\label{thm:conj_bound}
    For every positive integer $n$ and every prime power $q$, we have \[\kk(U_n(q))\le p(n)^2n!\ q^{\frac{7}{44}n^2+\frac{n}{2}},\]
    where $p(n)$ denotes the number of integer partitions of $n$.
\end{thm}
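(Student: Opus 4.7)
The approach I would follow builds on the canonical matrix technique of Arregi and Vera-L\'opez. In that framework, each conjugacy class of $U_n(q)$ is represented uniquely by a canonical matrix, specified by a \emph{pivot set} $\Phi$ --- a collection of positions $(i,j)$ with $i<j$ forming a rook placement that satisfies certain compatibility conditions --- together with a choice of nonzero scalar in $\F_q^\times$ at each pivot and arbitrary scalars in $\F_q$ at a set $F(\Phi)$ of \emph{free positions} determined by $\Phi$. Every other strict upper-triangular entry is forced to be zero. This gives the identity
\[
\kk(U_n(q)) \;=\; \sum_\Phi (q-1)^{|\Phi|}\, q^{|F(\Phi)|},
\]
where the sum is over valid pivot sets, and reduces the problem to bounding this sum.

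My plan is to reorganize the sum. First, I would group pivot sets into classes indexed by a combinatorial invariant whose total count is at most $p(n)^2\,n!$ --- for instance, an invariant that records the coarse shape of $\Phi$ through two integer partitions of $n$ (such as a row-weight and column-weight profile) together with a permutation encoding the finer matching of rows to columns. This accounts for the combinatorial prefactor. Then, within each class, I would show that $|\Phi| + |F(\Phi)| \le \tfrac{7}{44}n^2 + \tfrac{n}{2}$, uniformly over all $\Phi$ in that class.

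The second step is the technical heart of the argument. Each pivot ``blocks'' certain cells of the strict upper triangle from being free, so adding pivots decreases $|F(\Phi)|$ but increases $|\Phi|$; the total exponent is therefore governed by a competition. The Arregi--Vera-L\'opez optimization of this tradeoff yields the crude constant $1/6$; improving to $7/44$ requires a strictly sharper local-to-global estimate. A natural strategy is to subdivide the strict upper triangle into zones --- for example by diagonal bands $j - i \in [a,b]$, or by the position of the nearest pivot --- and to set up a small linear program that bounds the weighted contribution of each zone. The constant $\tfrac{7}{44}$ should then emerge as the optimum of this program, attained at an extremal family of pivot configurations that one would need to identify explicitly.

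The main obstacle is establishing this tight quadratic bound. It requires handling several qualitatively different pivot geometries --- dense vs.\ sparse, near-diagonal vs.\ far-from-diagonal, clustered vs.\ spread out --- and showing that none exceeds $\tfrac{7}{44}n^2+O(n)$. Finding a weighting or charging scheme under which all the relevant inequalities hold simultaneously and are tight at a single extremal configuration is the central technical challenge; the residual linear term $\tfrac{n}{2}$ suggests that boundary contributions near the first few superdiagonals must be absorbed carefully so as not to inflate the leading quadratic estimate.
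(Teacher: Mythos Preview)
Your proposal is a research outline, not a proof: you explicitly flag the step $|\Phi|+|F(\Phi)|\le \tfrac{7}{44}n^2+\tfrac{n}{2}$ as ``the main obstacle'' and offer only a vague linear-program strategy for it. The Arregi--Vera-L\'opez optimization of exactly this tradeoff gives $\tfrac{1}{6}$, and you have not supplied any concrete mechanism that would push below $\tfrac{1}{6}$. Zoning by diagonal bands and charging arguments are natural things to try, but absent an explicit extremal configuration and an explicit weighting that certifies $\tfrac{7}{44}$, there is no proof here.

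More importantly, the paper's route is structurally different and its improvement past $\tfrac{1}{6}$ does \emph{not} come from a sharper per-class estimate of the kind you are aiming for. The paper works with $\abs{\comm(\U_n)}$ rather than canonical matrices, stratifies commuting pairs $(A,B)$ by the Jordan types $(\lambda,\mu)$ of \emph{both} matrices, and combines three bounds: a gap-array bound on $\abs{\comm(\lambda)}$ (which by itself gives only $\tfrac{1}{6}$, tight near $\lambda'_2\approx n/3$), the symmetric bound on $\abs{\comm(\mu)}$, and the crude bound $\abs{\comm(\lambda,\mu)}\le F_\lambda F_\mu$ that ignores commutativity entirely. The point is that the partitions $\lambda$ for which the gap-array bound is weakest are rather constrained, and for those the bound $F_\lambda F_\mu$ becomes strong; a two-parameter optimization over the three regimes yields the constant $\tfrac{20\sqrt{2}}{49}-\tfrac{41}{98}<\tfrac{7}{44}$. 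This exploitation of the symmetry in $(A,B)$ has no obvious analogue in the canonical-matrix framework, which is inherently one-sided (one matrix per class), so even a successful execution of your plan would likely require a genuinely new idea beyond what you have sketched.
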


Our approach is to estimate the number of pairs $(A,B)$ of commuting matrices in~$U_n(q)$ by conjugating $A$ into Jordan canonical form, and determining the possibilities for the image of $B$ under this conjugation.
There are many choices for matrices which conjugate $A$ into Jordan form, and the image of $B$ depends on this choice.
Section~\ref{sec:jordan_forms_and_conj} defines our canonical choice $X_A$ for conjugation.

For each upper-triangular matrix $A$ we conjugate its centralizer $C_\U(A)$ via our canonical choice $X_A$ defined in Section~\ref{sec:jordan_forms_and_conj}.
The resulting space $X_AC_\U(A)X_A^{-1}$ can often be described by a combinatorial object which we call a \emph{gap array}.
Section~\ref{sec:gap} introduces gap arrays and proves several structural lemmas about them.
While not every space $X_AC_\U(A)X_A^{-1}$ can be encoded by a gap array, every such space is a subspace of one encoded by a gap array.
Determining the sizes of these subspaces via the combinatorics of gap arrays, we obtain the same upper bound as Marberg and Arregi and Vera-L\'opez (see Corollary \ref{cor:sixth}).
However, the technique of gap arrays is amenable to further optimization.
These optimizations are the content of the proof of Theorem \ref{thm:better_conj_bound}, from which Theorem \ref{thm:conj_bound} follows (see Section \ref{sec:proof}).

In Section \ref{sec:lower_central}, we develop another set of tools for computing bounds on the number of conjugacy classes in groups in the lower central series for $U_n(q)$.
We prove:
\begin{thm}\label{thm:lower_central_bound}
    Let $U_{n,k}(q)$ denote the $k$th group in the lower central series for $U_n(q)$, and let
    \[\gamma_m:=\frac{1}{6}-\frac{13}{24}\cdot 4^{-m}+2^{-(m+1)}-4^{-(m+1)}m.\]
    Then for every $q$,
    \[\kk(U_{n,k}(q))\le q^{\gamma_m n^2(1+o_m(1))},\]
    where $m=\floor{\log_2\left(\tfrac{n}{k}\right)}$, and $o_m(1)$ denotes a function which, for each fixed $m$, tends to zero as $n$ tends to infinity.
\end{thm}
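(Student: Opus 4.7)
The plan is to bound the number of commuting pairs $(A,B)\in U_{n,k}(q)^2$ and then divide, via the identity
\[
\kk(U_{n,k}(q)) \;=\; \frac{|\{(A,B)\in U_{n,k}(q)^2 : AB=BA\}|}{|U_{n,k}(q)|}.
\]
Since $|U_{n,k}(q)|=q^{\binom{n-k+1}{2}}$, once a bound on the numerator is in hand the denominator contribution is explicit.

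First, I would exploit the structural restriction specific to the lower central series: a matrix $A\in U_{n,k}(q)$ has $A-I$ supported only on superdiagonals at distance at least $k$ from the main diagonal, so $(A-I)^{\ell}=0$ for $\ell=\ceil{n/k}$. This forces the Jordan type partition $\lambda$ of $A$ to satisfy $\lambda_1\le\ell$, and consequently only truncated partitions appear in the sum over types. Since $\ell\approx 2^{m}$ with $m=\floor{\log_2(n/k)}$, this truncation is exactly what the parameter $m$ records.

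Next, given $A$ of Jordan type $\lambda$ with $\lambda_1\le\ell$, I would reuse the canonical conjugation $X_A$ of Section~\ref{sec:jordan_forms_and_conj} and bound $|C_{U_{n,k}}(A)|\le|C_U(A)|$ from the gap array of $\lambda$ developed in Section~\ref{sec:gap}, exactly as in the proof of Theorem~\ref{thm:conj_bound}. Summing $|C_U(A)|$ over all $A$ of type $\lambda$ and then over admissible $\lambda$ reduces the matter to a combinatorial optimization: maximize an exponent $E(\lambda)n^2$ subject to the truncation $\lambda_1\le\ell$.

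The main obstacle—and the real content of the theorem—is the precise evaluation of this constrained maximum. I expect the near-extremal partitions to be rectangular, $\lambda\approx(\ell^{n/\ell})$: for such a shape the gap array breaks into $n/\ell$ interacting blocks of length $\ell$, which recasts the problem as a smaller instance of the same optimization on an $(n/\ell)\times(n/\ell)$ grid. Iterating this dyadic reduction $m$ times should produce a telescoping expression in powers of $4^{-1}$ and $2^{-1}$ that collapses precisely to
\[
\gamma_m \;=\; \tfrac{1}{6} - \tfrac{13}{24}\cdot 4^{-m} + 2^{-(m+1)} - 4^{-(m+1)}\,m,
\]
with $\tfrac{1}{6}$ being the unconstrained Marberg/Arregi--Vera-L\'opez coefficient and the remaining three terms the corrections forced by the rectangular truncation at depth $m$. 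Contributions from non-rectangular partitions, the factor counting $A$ of a fixed type, and subleading error terms are all absorbed into the $o_m(1)$ factor.
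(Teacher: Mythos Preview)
Your proposal does not work, and the paper in fact proves this theorem by a completely different route which it explicitly says ``do[es] not overlap with the techniques of gap arrays.''

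The central flaw is that the Jordan-type restriction $\lambda_1\le\ell=\lceil n/k\rceil$ gives you \emph{nothing} in the gap-array optimization. Recall from Theorem~\ref{thm:h_bound} and the proof of Corollary~\ref{cor:sixth} that the relevant exponent is governed by $h(\lambda')$, whose maximum over $\lambda\vdash n$ is $n^2/3$, attained when $\lambda'=(2n/3,\,n/3,\,0,\dots)$, i.e.\ when $\lambda$ has exactly \emph{two} columns. Your constraint $\lambda_1\le\ell$ restricts the number of parts of $\lambda'$ to be at most $\ell$; but since the unconstrained optimizer already has only two parts, the constraint is non-binding for every $\ell\ge2$, that is for every $m\ge1$. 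So your method recovers at best the $\tfrac{1}{6}$ exponent of Corollary~\ref{cor:sixth} uniformly in $m$, not $\gamma_m$. The ``dyadic reduction'' you sketch has no counterpart in the gap-array machinery: $h$ is an explicit quadratic form with a two-term maximizer, and there is no recursive structure to iterate. A second obstruction is that you need the count of matrices in $\U_{n,k}$ of a given Jordan type to run the $F_\lambda$-versus-centralizer tradeoff; replacing this by $F_\lambda$ (the count in all of $\U_n$) and then dividing by $|U_{n,k}|$ is far too lossy, and the paper notes in its final remarks that sharp estimates for this restricted count are not available.

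What the paper actually does is decompose $\U_{n,k}$ as $\U_{a,k}\oplus V\oplus\U_{b,k}$ with $a=\lfloor n/2\rfloor$, $b=\lceil n/2\rceil$ (Lemma~\ref{lem:key_lower_central}), so that commuting pairs in $\U_{n,k}$ correspond to commuting pairs $(A_1,A_2)\in\comm(\U_{a,k})$, $(B_1,B_2)\in\comm(\U_{b,k})$ together with a linear condition on the rectangular piece governed by $\dim(\im T_{A_1,B_1}+\im T_{A_2,B_2})$. One then splits the sum at a threshold rank $r$: above $r$ the $q^{-r}$ factor combines with the inductive bound on $\cp(a,k)$ and $\cp(b,k)$; below $r$ one forgets commutativity and uses Lemma~\ref{lem:no_comm} to bound the number of pairs $(A,B)$ with $\rank T_{A,B}$ small. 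Optimizing $r$ yields the recursion $\beta_m=\tfrac14\bigl(\beta_{m-1}-(1-2^{-m})^2\bigr)$ for the commuting-probability exponent, which solves to the stated closed form; adding back $\log_q|U_{n,k}|\approx\tfrac12(1-k/n)^2n^2$ gives $\gamma_m$. The genuine recursion is on the \emph{matrix size} via this $2\times2$ block splitting, not on partition shapes.
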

The techniques we use to establish this result are entirely unrelated to gap arrays.
The main idea for the proof is to bound the number of pairs of commuting matrices in $\U_{n,k}(q)$ by splitting each of the matrices into three parts (see Figure \ref{fig:decomposition}).
Two of these parts are upper-triangular, and form smaller instances of the same problem.
We use the fact that pairs of these smaller upper-triangular parts commute to bound the number of viable completions of the full upper-triangular matrices in terms of the ranks of the smaller parts.
This bound is the content of Lemma~\ref{lem:key_lower_central}.

We then bound the number of pairs of commuting matrices in $\U_{n,k}(q)$ by conditioning on the ranks of the smaller triangular parts.
On the one hand, if the smaller triangular parts must have small rank, then the number of such matrices is a limiting factor.
On the other hand, if the ranks of the triangular parts are large, there are stringent conditions on the viable completions, limiting the total number of possibilities.

\section{Basic Definitions and Notation}\label{sec:notation}
\subsection{Algebra}
For a prime power $q$, let $\F_q$ denote the field with $q$ elements, and let $\U_n(q)$ denote the $\F_q$-algebra of strictly upper-triangular matrices.
Define the \emph{unitriangular group} to be $U_n(q)=\{1+X\mid X\in\U_n(q)\}$.
The field $\F_q$ will almost always be clear from context.
In such cases we will omit the parameter $q$.

For an associative algebra $\cal A$, define \[\comm(\cal A)=\{(X,Y)\in\cal A\times A\mid XY=YX\}.\]
For a group $G$, we use the same notation $\comm(G)$ to denote the set of pairs of commuting elements in $G$.
Note that $X,Y\in\U_n$ commute if and only if the elements $1+X,1+Y\in U_n$ commute, so $\abs{\comm(\U_n)}=\abs{\comm{U_n}}$.

From Burnside's lemma, the number of conjugacy classes $\kk(G)$ in a finite group~$G$ can be computed by
\[\kk(G)=\frac{1}{\abs{G}}\sum_{g\in G}\abs{C_G(g)}=\frac{\abs{\comm(G)}}{\abs{G}},\] where $C_G(g)$ denotes the centralizer of $g\in G$.

For positive integers $a$ and $b$, let $\M_{a\times b}$ denote the vector space of ($a\times b$)-matrices over $\F_q$.
We write $C_{\M}(X)$ for the centralizer of $X$ in $\M_{n\times n}$. That is,
\[C_{\M}(X):=\{A\in\M_{n\times n}\mid AX=XA\}.\]
Similarly, define $C_{\U}(X):=\{A\in\U_n\mid AX=XA\}$.

\subsection{Partitions}
Recall standard notation in partition theory (see, e.g.~\cite{macdonald1995symmetric,stanley2011enum1}).
Let $p(n)$ denote the number of integer partitions of $n$.
For a partition $\lambda\vdash n$, we write $\lambda'$ for the conjugate partition.
That is, $\lambda'=(\lambda'_1,\lambda'_2,\dots)$, where $\lambda'_i$ is the number of parts in $\lambda$ of size at least $i$.
We use $\ell(\lambda)$ to denote the number of parts (or \emph{length}) of $\lambda$.
Clearly $\ell(\lambda)=\lambda'_1$.

For a partition $\lambda\vdash n$, we use the notation 
\begin{equation}\label{eq:n_fn}
\n(\lambda):=\sum_i (i-1)\lambda_i=\sum_i\binom{\lambda_i'}{2}.
\end{equation}
It is common to use the character ``$n$'' for both this function and the size of a partition.
We choose to use the bold roman ``$\n$'' for the function to avoid confusion.

One can also write a partition as $(1^{m_1}2^{m_2}3^{m_3}\dots)$, where $m_i$ denotes the number of parts of size $i$.
Clearly, $m_i$ can be computed by $m_i=\lambda'_i-\lambda'_{i+1}$.

\subsection{Jordan canonical forms}
Let $\lambda=(\lambda_1,\dots,\lambda_\ell)$ be a partition of $n$.
By $J_\lambda$, we denote the Jordan canonical nilpotent matrix which has blocks of size~$\lambda_1,\lambda_2,\dots,\lambda_\ell$.
For example, \[J_{(3,2)}=\left(\begin{array}{ccc|cc}
        0 & 1 & 0 & 0 & 0\\
        0 & 0 & 1 & 0 & 0\\
        0 & 0 & 0 & 0 & 0\\\hline
        0 & 0 & 0 & 0 & 1\\
        0 & 0 & 0 & 0 & 0
\end{array}\right).\]

Every upper-triangular matrix $A\in\U_n$ is $\GL$-conjugate to some $J_\lambda$.
We say that~$\lambda$ is the \emph{shape} of $A$ if $A$ is $\GL$-conjugate to $J_\lambda$, and write $\shape(A)=\lambda$.
For~$A\in\U_n$, and~$1\le k\le n$, let $A|_k$ denote the top-left $(k\times k)$-submatrix of $A$.

For a partiton $\lambda\vdash n$, let $F_\lambda(q)$ denote the number of matrices in $\U_n(q)$ of Jordan type $\lambda$.
Yip proved that $F_\lambda$ is a polynomial in $q$ with integer coefficients and  degree equal to $\binom{n}{2}-\n(\lambda)$ \cite{yip2013q}.
Moreover, Yip showed that the leading coefficient is~$f^\lambda$, the number of standard Young tableaux of shape~$\lambda$.
It is clear from Yip's proof that 
\begin{equation}\label{eq:yip}
    F_\lambda(q)\le f^\lambda q^{\binom{n}{2}-\n(\lambda)}.
\end{equation}
The only fact we will use regarding standard Young tableaux is that for every $\lambda\vdash n$, we have $f^\lambda\le\sqrt{n!}$ (see, e.g.~\cite{sagan2001symmetric}).


\subsection{Partitions as elements of $\ell^1(\N)$}
It will be useful to treat partitions as infinite decreasing sequences of positive integers $(\lambda_1,\lambda_2,\dots)$ where $\lambda_i=0$ for $i>\ell(\lambda)$.
In this way, we can treat $\lambda$ as an element of $\ell^1(\N)$.
Moreover, as $\ell^1(\N)\subseteq\ell^2(\N)$, it makes sense to talk about the inner product of two partitions.
For partitions $\lambda$ and~$\mu$ (not necessarily of the same integer) define
\begin{align*}
    \angs{\lambda,\mu} &:= \sum_{i}\lambda_i\mu_i,\text{ and}\\
    \|\lambda\| &:= \sqrt{\angs{\lambda,\lambda}}.
\end{align*}
Note that combining \eqref{eq:n_fn} with this analytic notation, the function $\n$ can be expressed as
\begin{equation}\label{eq:n_as_ell_two}
    \n(\lambda)=\frac{\|\lambda'\|^2}{2}-\frac{n}{2}.
\end{equation}

Similarly, because the multiplicity $m_i$ of the part $i$ in $\lambda$ can be computed as $m_i=\lambda'_i-\lambda'_{i+1}$, we have
\begin{equation}\label{eq:m_as_ell_two}
(m_1,m_2,\dots)=v-Lv,
\end{equation}
where $L$ denotes the left-shift operator on $\ell^1(\N)$.

\begin{rmk*}
    We define $\|\cdot\|$ to be the $\ell^2$-norm, rather than the~$\ell^1$-norm.
    Treating~$\lambda\vdash n$ as an element of $\ell^1(\N)$, its $\ell^1$-norm is simply $n$, and therefore does not merit its own notation.
    When we need the $\ell^1$-norm of a vector $v\in\ell^1(\N)$, we will explicitly add a subscript to the norm, as in $\|v\|_1$.
\end{rmk*}

\section{Jordan forms and conjugation}\label{sec:jordan_forms_and_conj}
For an upper-triangular matrix $A\in\U_n$, our approach to understanding the size of its centralizer $C_{\U}(A)$ will be to conjugate $A$ into its Jordan form $J_\lambda$ by some $X_A$.
We then analyze $X_AC_{\U}(A)X_A^{-1}$, noting that it is a subspace of $C_{\M}(J_\lambda)$.
Given $A\in\U_n$, there is more than one choice for a matrix $X_A$ satisfying $X_AAX_A^{-1}=J_\lambda$.
Different choices of $X_A$ may yield different subspaces of $C_{\M}(J_\lambda)$.
We must therefore specify~$X_A$ carefully.
We do this inductively, by first assuming that $A|_{n-1}=J_\mu$ for some partition $\mu\vdash (n-1)$.

We begin by giving an overview of the process by which we put $A$ into Jordan form.
This overview, along with the example computation below, should provide enough detail for the reader to understand the conjugation process.
For completeness, we also provide explicit definitions of the matrices used in the conjugation procedure.

\subsection*{Conjugation procedure}
\begin{enumerate}
    \item Use the non-zero entries from $A|_{n-1}=J_\mu$ to set as many entries as possible in column $n$ to zero.
        This can be achieved with a product of upper-triangular transvections.
        The resulting matrix will only have non-zero entries in column $n$ which are at the bottom of a block.
        That is, in cells of the form~$(n, \mu_1+\cdots+\mu_k)$.
        If the entire column $n$ is zero in the result, skip ahead to step \ref{conj_alg:perm_start}.
        In this case, the matrices by which we conjugate in the intermediate steps will all be defined as the identity.

    \item We may now assume that column $n$ is non-zero. Conjugate by a diagonal matrix which scales the last column and last row in such a way as to set the first non-zero entry in column $n$ to be 1.

    \item Use the first non-zero entry in column $n$ (which now contains the value 1) to set every other value in that column to zero.
        This is achieved via a product of lower-triangular matrices.
        Each such lower triangular matrix will fix the $J_\mu$ in the top-left corner, and set a single cell in column $n$ to be zero.
    \item\label{conj_alg:first_perm} Apply a permutation matrix to move column $n$ so that its non-zero value aligns with a Jordan block, effectively increasing the size of this block by~1.
    \item\label{conj_alg:perm_start} At this point the matrix is in Jordan form, modulo rearranging the blocks to be in descending order.
        At most one block must be moved to guarantee this ordering.
        The block that must be moved is the one whose size was increased (if we increased the size of a block at all).
        Call this block the \emph{current block}.
        If we did not increase the size of a block, then we created a new block of size 1.
        In this case, the newly created block will be called the current block.
        Apply a permutation matrix which moves the current block as far to the top-left as possible, while still guaranteeing that the blocks are in descending order.
\end{enumerate}

\begin{exmp}\label{exmp:conj}
    The following is a worked example of the conjugation procedure for a matrix $A\in\U_9(\Q)$.
    We use $\Q$ as the field for simplicity, though using a finite field does not present any extra difficulty.
    We write $A^{[i]}$ for the matrix obtained after step $i$.
\[\scalemath{0.75}{\hphantom{{}^{[1]}}A=}\scalemath{0.55}{\left(\begin{array}{ccc|cc|cc|c||c}
    & 1 &   &   &   &   &   &   & 3\\
    &   & 1 &   &   &   &   &   & 1\\
    &   &   &   &   &   &   &   & 0\\\hline
    &   &   &   & 1 &   &   &   & 2\\
    &   &   &   &   &   &   &   & 2\\\hline
    &   &   &   &   &   & 1 &   & 4\\
    &   &   &   &   &   &   &   & 0\\\hline
    &   &   &   &   &   &   &   & 1\\\hline\hline
    0 & 0 & 0 & 0 & 0 & 0 & 0 & 0 & 0
\end{array}\right)}
\hspace{4mm}
\scalemath{0.75}{A^{[1]}=}\scalemath{0.55}{\left(\begin{array}{ccc|cc|cc|c||c}
    & 1 &   &   &   &   &   &   & 0\\
    &   & 1 &   &   &   &   &   & 0\\
    &   &   &   &   &   &   &   & 0\\\hline
    &   &   &   & 1 &   &   &   & 0\\
    &   &   &   &   &   &   &   & 2\\\hline
    &   &   &   &   &   & 1 &   & 0\\
    &   &   &   &   &   &   &   & 0\\\hline
    &   &   &   &   &   &   &   & 1\\\hline\hline
    0 & 0 & 0 & 0 & 0 & 0 & 0 & 0 & 0
\end{array}\right)}
\hspace{4mm}
\scalemath{0.75}{A^{[2]}=}\scalemath{0.55}{\left(\begin{array}{ccc|cc|cc|c||c}
    & 1 &   &   &   &   &   &   & 0\\
    &   & 1 &   &   &   &   &   & 0\\
    &   &   &   &   &   &   &   & 0\\\hline
    &   &   &   & 1 &   &   &   & 0\\
    &   &   &   &   &   &   &   & 1\\\hline
    &   &   &   &   &   & 1 &   & 0\\
    &   &   &   &   &   &   &   & 0\\\hline
    &   &   &   &   &   &   &   & \tfrac{1}{2}\\\hline\hline
    0 & 0 & 0 & 0 & 0 & 0 & 0 & 0 & 0
\end{array}\right)}\]
\[\scalemath{0.75}{A^{[3]}=}\scalemath{0.55}{\left(\begin{array}{ccc|cc|cc|c||c}
    & 1 &   &   &   &   &   &   & 0\\
    &   & 1 &   &   &   &   &   & 0\\
    &   &   &   &   &   &   &   & 0\\\hline
    &   &   &   & 1 &   &   &   & 0\\
    &   &   &   &   &   &   &   & 1\\\hline
    &   &   &   &   &   & 1 &   & 0\\
    &   &   &   &   &   &   &   & 0\\\hline
    &   &   &   &   &   &   &   & 0\\\hline\hline
    0 & 0 & 0 & 0 & 0 & 0 & 0 & 0 & 0
\end{array}\right)}
\hspace{4mm}
\scalemath{0.75}{A^{[4]}=}\scalemath{0.55}{\left(\begin{array}{ccc|ccc|cc|c}
    & 1 &   &   &   & 0 &   &   & \\
    &   & 1 &   &   & 0 &   &   & \\
    &   &   &   &   & 0 &   &   & \\\hline
    &   &   &   & 1 & 0 &   &   & \\
    &   &   &   &   & 1 &   &   & \\
    0 & 0 & 0 & 0 & 0 & 0 & 0 & 0 & 0\\\hline
    &   &   &   &   & 0 &   & 1 & \\
    &   &   &   &   & 0 &   &   & \\\hline
    &   &   &   &   & 0 &   &   &
\end{array}\right)}
\hspace{4mm}
\scalemath{0.75}{A^{[5]}=}\scalemath{0.55}{\left(\begin{array}{ccc|ccc|cc|c}
    & 1 & 0 &   &   &   &   &   & \\
    &   & 1 &   &   &   &   &   & \\
    0 & 0 & 0 & 0 & 0 & 0 & 0 & 0 & 0\\\hline
    &   & 0 &   & 1 &   &   &   & \\
    &   & 0 &   &   & 1 &   &   & \\
    &   & 0 &   &   &   &   &   & \\\hline
    &   & 0 &   &   &   &   & 1 & \\
    &   & 0 &   &   &   &   &   & \\\hline
    &   & 0 &   &   &   &   &   &
\end{array}\right)}\]
    Note that $A^{[4]}=A^{[5]}$, but the explicitly shown zeros are in different locations.
    This is to emphasize that the last step in our conjugation procedure may fix the Jordan form (as it does in this case).
    However, when we apply the same procedure to $C_\U(A)$, this last action will often be non-trivial.
\end{exmp}

\begin{rmk}
    It is tempting to assume that, if $A$ is already in Jordan form, then the conjugating matrix $X_A$ will be the identity.
    This is not necessarily so.
    While many of the steps in the conjugation procedure are trivial, the permutation matrices in steps \ref{conj_alg:first_perm} and \ref{conj_alg:perm_start} need not be the identity.
    For example, if $A=\id_n$, then $X_A$ is the permutation matrix defined by the permutation $w:k\mapsto n+1-k$.
    It is true however that if $A$ is already in Jordan form, then $X_A$ will be a permutation matrix.
\end{rmk}

To be specific about this procedure, we now write down explicitly the matrices used in the conjugaction procedure.
We conjugate $A$ by a product of five matrices, one for each step in the conjugation procedure.
As was shown in Example \ref{exmp:conj}, for~$i=1,\dots,5$ we will use $A^{[i]}$ to denote the matrix obtained after step $i$.

First, conjugate by $E_A:=\prod_{i=1}^{n-1}\EE_{i+1,n}(A_{i,n})$, where~$\EE_{i,j}(\alpha)$ is the transvection having 1s on the diagonal, $\alpha$ in position $(i,j)$, and zeros everywhere else.
Conjugating $A$ by $E_A$ gives the matrix $A^{[1]}=E_AAE_A^{-1}$ satisfying \begin{enumerate}
        \item $A^{[1]}|_{n-1}=J_\mu$,
        \item each row of $A^{[1]}$ has at most one non-zero entry.
    \end{enumerate}

    Thus, the only non-zero entries in the last column of $A^{[1]}$ are in line with the bottom of a Jordan block of $J_\mu$.

    Second, let $x$ denote the first non-zero entry in column~$n$ of $A^{[1]}$, if it exists (and~$x=1$ otherwise).
    Define \[\Delta_A:=\diag(\underbrace{1,\dots,1}_{n-1},x).\]
    Conjugating $A^{[1]}$ by $\Delta_A$ yields the matrix $A^{[2]}=\Delta_AA^{[1]}\Delta_A^{-1}$ which has at most one non-zero entry in each row, and which has a 1 as its first non-zero entry in column~$n$ (if column $n$ is has any non-zero entries).

    Third, define a lower-triangular matrix $L_A$ which uses the first non-zero entry in the $n$th column to set all other entries in that column to zero.
    If column $n$ is already zero, then simply set $L_A$ to be the identity.
    Otherwise, let $\tilde\mu_s:=\sum_{i=1}^s\mu_i$.
    These numbers are the indices of rows which are at the bottom of Jordan blocks.
    Every non-zero entry in column $n$ of $A^{[2]}$ appears in such a row.
    Define $r$ to be the integer such that $\tilde\mu_r$ is the index of the row containing the first non-zero entry in the $n$th column of $A^{[2]}$,
    and for $\alpha\in\F_q$ define 
    \begin{equation}\label{eq:lower}
        F_{j,r}(\alpha):=1+\alpha\sum_{k=1}^{\mu_j}\scalebox{1.3}{$e$}_{\tilde\mu_{j-1}+k,\;\tilde\mu_r-\mu_j+k},
    \end{equation}
    where $e_{i,j}$ denotes the matrix with a 1 in position $(i,j)$ and zeros everywhere else.
    Left-multiplication by $F_{j,r}(\alpha)$ takes the last $\mu_j$ rows in the $r$th block (rows~$\tilde\mu_r-\mu_j+1$ through $\tilde\mu_r$), and adds them to the rows in the $j$th block (first scaling them by~$\alpha$).
    Right-multiplication by $F_{j,r}(\alpha)$ takes the $\mu_j$ columns in the $j$th block, and subtracts them from the last $\mu_j$ columns in the $r$th block (first scaling them by $\alpha$).
    Conjugating~$A^{[2]}$ by $F_{j,r}(-A^{[2]}_{n,\tilde\mu_j})$ will leave $A^{[2]}$ unchanged in every entry except in the entry indexed by $(n,\tilde\mu_j)$, which will be set to zero.
    We can therefore define \[L_A:=\prod_{j>r} F_{j,r}(-A^{[2]}_{n,\tilde\mu_j}),\] so that $A^{[3]}=L_AA^{[2]}L_A^{-1}$ has one non-zero entry in column $n$, and that value is 1.

    Fourth, conjugate by a permutation matrix to make column $n$ align with the correct block.
    We apply $\sigma_A:=(\tilde\mu_r+1,\tilde\mu_r+2,\dots,n)$.
    If column $n$ is zero in $A^{[3]}$ then no such permutation is necessary, and we set $\sigma_A$ to be the identity.
    Now $A^{[4]}=\sigma_A A^{[3]} \sigma_A^{-1}$ is the direct sum of Jordan blocks, though not necessarily in descending order.

    Lastly, we apply a permutation matrix $\tau_A$ which moves the block as close to the top-left as possible to put the blocks into descending order, in such a way as to preserve the relative order of all other Jordan blocks.\\

    We are now in a position to define our choice of conjugating matrix $X_A$, so that $X_AAX_A^{-1}$ is in Jordan form.
    Define $X_A$ recursively.
    For the unique $A\in\U_1$, we take $X_A=(1)$ to be the identity matrix.
    For $n>1$, let $B=A|_{n-1}$, and define~$A'=X_BAX_B^{-1}$, so that $A'|_{n-1}$ is in Jordan form.
    Then define
    \begin{align*}
        Y_{A'} &:= \tau_{A'}\sigma_{A'}L_{A'}\Delta_{A'}E_{A'},\text{ and}\\
        X_A &:= Y_{A'}X_B.
    \end{align*}
    Here we have implicitly identified $X_B\in\M_{(n-1)\times(n-1)}$ with $X_B\oplus1\in\M_{n\times n}$.
    By construction, we have $X_A A X_A^{-1}=J_\lambda$.

\section{Gap arrays}\label{sec:gap}
\subsection{Preliminary results}
\begin{lem}\label{lem:jordan_rank}
    Let $\lambda\vdash a$, $\mu\vdash b$, and let $T_{\lambda,\mu}:\M_{a\times b}\to \M_{a\times b}$ be defined by $T_{\lambda,\mu}(X)=J_\lambda X-XJ_\mu$. Then
    \[\dim\ker T_{\lambda,\mu}=\sum_{i,j}\min\{\lambda_i,\mu_j\}=\angs{\lambda',\mu'}.\]
    In particular, $\dim C_\M(J_\lambda)=\|\lambda'\|^2.$
\end{lem}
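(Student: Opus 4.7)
The plan is to exploit the block structure of $J_\lambda$ and $J_\mu$ to reduce the problem to a single Jordan block against a single Jordan block, and then count by hand.

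First, I would write $J_\lambda = \bigoplus_i J_{\lambda_i}$ and $J_\mu = \bigoplus_j J_{\mu_j}$, and accordingly decompose any $X \in \M_{a \times b}$ into sub-blocks $X_{ij}$ of size $\lambda_i \times \mu_j$. A direct check shows that $T_{\lambda,\mu}(X) = 0$ if and only if $J_{\lambda_i} X_{ij} - X_{ij} J_{\mu_j} = 0$ for every pair $(i,j)$. Thus $\ker T_{\lambda,\mu}$ is the direct sum of the kernels of the block-level operators $T_{\lambda_i,\mu_j}$, and it suffices to show $\dim \ker T_{a,b} = \min\{a,b\}$ whenever $J_a$ and $J_b$ are single nilpotent Jordan blocks.

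For a single block, I would expand the equation $J_a X = X J_b$ entry-by-entry. Since $J_a$ shifts rows up and right-multiplication by $J_b$ shifts columns right, the interior equations force $X_{i+1,j} = X_{i,j-1}$, i.e.\ $X$ is constant along the diagonals $j - i = c$. The boundary equations (coming from the last row of $J_a X$ and the first column of $X J_b$) force $X_{a,k} = 0$ for $k < b$ and $X_{k,1} = 0$ for $k > 1$, which kills every diagonal with $j - i < b - a$ and every diagonal with $j - i < 0$. A quick case analysis on whether $a \le b$ or $a > b$ shows that exactly $\min\{a,b\}$ diagonals survive, each contributing one free parameter, giving $\dim \ker T_{a,b} = \min\{a,b\}$.

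Finally, I would derive the identity $\sum_{i,j} \min\{\lambda_i,\mu_j\} = \langle \lambda', \mu' \rangle$ by the standard double-counting trick: write $\min\{\lambda_i, \mu_j\} = \sum_k [\lambda_i \ge k][\mu_j \ge k]$, swap the order of summation, and recognize $\sum_i [\lambda_i \ge k] = \lambda'_k$ and likewise for $\mu$. The ``in particular'' claim about $C_\M(J_\lambda) = \ker T_{\lambda,\lambda}$ then follows by setting $\mu = \lambda$ and using $\langle \lambda', \lambda' \rangle = \|\lambda'\|^2$. The only step requiring care is the boundary bookkeeping in the single-block computation; the rest is purely formal.
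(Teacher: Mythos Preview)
Your proposal is correct and follows essentially the same approach as the paper: reduce to single Jordan blocks via the block decomposition, identify the kernel on a single block with the ``surviving'' diagonals to get $\min\{a,b\}$, and then verify the combinatorial identity $\sum_{i,j}\min\{\lambda_i,\mu_j\}=\langle\lambda',\mu'\rangle$. The only cosmetic difference is that the paper phrases the last step as counting pairs of cells in the Young diagrams lying in the same column, whereas you use the equivalent indicator-function manipulation $\min\{\lambda_i,\mu_j\}=\sum_k[\lambda_i\ge k][\mu_j\ge k]$.
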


\begin{proof}
    To compute the rank, we first do so for two Jordan blocks $J_{(a)}$ and $J_{(b)}$.
    Note that $J_{(a)}X$ is the matrix obtained by removing the top row of $X$, shifting all other rows upwards by one, and adding a row of zeros at the bottom.
    Similarly,~$XJ_{(b)}$ is the matrix obtained by removing the rightmost column of $X$, shifting all other columns right by one, and adding a new column of zeros on the left.
    If~$J_{(a)}X=XJ_{(b)}$, then these conditions guarantee that
    \begin{enumerate}
        \item all diagonals (top-left to bottom-right) are constant, and
        \item if a diagonal does not touch both the topmost row and the rightmost column, then it must be zero.
    \end{enumerate}
    Moreover, these conditions exactly describe the solutions to $J_{(a)}X=XJ_{(b)}$.
    The dimension of this space is given by the quantity $\min\{a,b\}$, the number of diagonals which are not forced to be zero.

    In general, if either $\lambda\vdash a$ or $\mu\vdash b$ have more than one part, then we decompose~$\M_{a\times b}$ into smaller subspaces by $\M_{a\times b}\cong \bigoplus_{i,j}\M_{\lambda_i\times \mu_j}$, so that the action on~$\M_{\lambda_i\times\mu_j}$ becomes $T_{(\lambda_i),(\mu_j)}$.
    The dimension of $\ker T_{\lambda,\mu}$ is the sum of the dimensions of the kernels of these subspaces, so \[\dim\ker T_{\lambda,\mu}=\sum_{i,j}\min\{\lambda_i,\mu_j\}.\]

    To see that $\sum_{i,j}\min\{\lambda_i,\mu_j\}=\angs{\lambda',\mu'}$, we will show that both sides count the number of pairs of cells in the diagrams of $\lambda$ and $\mu$ (one from each diagram) which lie in the same column.

    On the one hand, we may specify the column from which we chose the cells first.
    Then any pair of elements from these columns will suffice.
    Therefore, we obtain~$\sum_i \lambda'_i\mu'_i=\angs{\lambda',\mu'}$.

    On the other hand, we may first chose the row to which each of the cells belong.
    If the cell in the diagram for $\lambda$ lies in row $i$, and the cell in the diagram for $\mu$ lies in row $j$, then there are $\min\{\lambda_i,\mu_j\}$ choices which place these cells in the same column.
    As these two expressions count the same quantity, we obtain \[\sum_{i,j}\min\{\lambda_i,\mu_j\}=\angs{\lambda',\mu'}.\]

    Lastly, taking $\lambda=\mu$, we see that $\ker T_{\lambda,\lambda}$ is the centralizer of $J_\lambda$, and therefore \[\dim C_\M(J_\lambda)=\dim\ker T_{\lambda,\lambda}=\|\lambda'\|^2,\] which completes the proof.
\end{proof}

\begin{rmk}
    Matrices which satisfy the first condition (constant on top-left to bottom-right diagonals) are called \emph{Toeplitz} matrices.
    The matrices in $\ker T_{\lambda,\mu}$ are not only Toeplitz on each block, but are also upper-triangular.
    Specifically, any diagonal which does not touch the rightmost column and topmost row of a block is necessarily zero.
\end{rmk}

\subsection{Defining gap arrays}
In this section, we introduce a new combinatorial object called a \emph{gap array}.
Gap arrays are used to encode certain subspaces of $C_\M(J_\lambda)$.
Many algebraic operations we can apply to these subspaces are encoded succinctly by combinatorial operations we can apply to gap arrays.
This will be our primary tool for computing upper bounds on $\kk(U_n)$.

\begin{defn}
    Let $\lambda\vdash n$ be a partition of length $\ell:=\ell(\lambda)$.
    A \emph{gap array} of type~$\lambda$ is an $\ell\times\ell$ matrix $\G=(\G_{i,j})$ of integers satisfying 
    \begin{equation}\label{eq:gap_def}
        \max\{0, \lambda_i - \lambda_j\} \le \G_{i,j} \le \lambda_i.
    \end{equation}
\end{defn}

We use gap arrays to define subspaces of $C_{\M}(J_\lambda)$.
Recall that $C_{\M}(J_\lambda)$ has a natural block decomposition into blocks of $(\lambda_i\times\lambda_j)$-submatrices.
The block with rows corresponding to the $i$th part of $\lambda$ and columns corresponding to the $j$th part of $\lambda$ is called the \emph{$(i,j)$-block}.
Specifically, it consists of all cells $(x,y)$ in the matrix such that 
    \begin{align*}
        \lambda_1+\cdots+\lambda_{i-1} &< x \le \lambda_1+\cdots+\lambda_i,\text{ and}\\
        \lambda_1+\cdots+\lambda_{j-1} &< y \le \lambda_1+\cdots+\lambda_j.
    \end{align*}

    \begin{defn}
        For a gap array $\G$ of type $\lambda$, we let $C(\G)$ denote the subspace of $C_{\M}(J_\lambda)$ satisfying the condition that on the $(i,j)$-block, the lowest $\G_{i,j}$ diagonals touching the right boundary are all zero.
    \end{defn}

\begin{exmp}
    Let $\lambda=(6,2,1,1)\vdash 10$, and let $\G$ denote the gap array \[\G=\begin{pmatrix}
            2 & 4 & 6 & 5\\
            1 & 0 & 1 & 2\\
            0 & 1 & 1 & 0\\
            1 & 0 & 0 & 0
    \end{pmatrix}.\]

    Then the corresponding subspace $C(\G)$ of $C_{\M}(J_\lambda)$ consists of all $10\times 10$ matrices of the form shown below (zeros omitted).
    In this example, $C(\G)$ is a 16-dimensional subspace of $C_{\M}(J_\lambda)$.

    \[\left(\begin{array}{cccccc|cc|c|c}
            &   & a_3 & a_4 & a_5 & a_6 & b_5 & b_6 &     & c_6\\
            &   &     & a_3 & a_4 & a_5 &     & b_5 &     & \\
            &   &     &     & a_3 & a_4 &     &     &     & \\
            &   &     &     &     & a_3 &     &     &     & \\
            &   &     &     &     &     &     &     &     & \\
            &   &     &     &     &     &     &     &     & \\\hline

            &   &     &     &     & d_2 & e_1 & e_2 & f_2 & \\
            &   &     &     &     &     &     & e_1 &     & \\\hline

            &   &     &     &     & g_1 &     &     &     & h_1\\\hline

            &   &     &     &     &     &     & i_1 & j_1 & k_1
    \end{array}\right)\]
\end{exmp}

\begin{rmk}
    The name ``gap array'' is chosen, because the recorded numbers measure the gap between the bottom of a block and the triangle it contains.
    Recall from the proof of Lemma \ref{lem:jordan_rank} that a $C_{\M}(J_\lambda)$ is characterized by, being Toeplitz on each block, and having any diagonal which does not touch both the topmost row and rightmost column of a block set to zero.
    Thus, $C_{\M}(J_\lambda)$ can be expressed as~$C(\G)$, where $\G_{i,j}=\max\{0,\lambda_i-\lambda_j\}$.
    At the other extreme, the zero subspace of~$\M_{n\times n}$ is $C(\G)$, where $\G_{i,j}=\lambda_i$.
    The inequalities in \eqref{eq:gap_def} which defines gap arrays encode these observations.
\end{rmk}

For gap arrays $\G=(\G_{i,j})$ and $\H=(\H_{i,j})$ of the same type, we say that $\G\le \H$ if $\G_{i,j}\le \H_{i,j}$ for all $i$ and $j$.
It follows that $\G\le \H$ if and only if $C(\H)\subseteq C(\G)$.
We also write $\abs{\G}=\sum_{i,j}\G_{i,j}$.

\begin{prop}\label{prop:gap_dim}
    Let $\G$ be a gap array of type $\lambda\vdash n$, and let $\ell=\ell(\lambda)$.
    Then \[\dim C(\G)=n\ell-\abs{\G}.\]
\end{prop}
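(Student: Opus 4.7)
The plan is to compute $\dim C(\G)$ block by block. Because the gap array condition imposes independent constraints on each $(i,j)$-block of the natural block decomposition of $C_\M(J_\lambda)$, the dimension decomposes as $\dim C(\G)=\sum_{i,j}d_{i,j}$, where $d_{i,j}$ is the number of free parameters surviving in the $(i,j)$-block. It then suffices to show $d_{i,j}=\lambda_i-\G_{i,j}$, since summing yields $\sum_{i,j}(\lambda_i-\G_{i,j})=\ell\sum_i\lambda_i-|\G|=n\ell-|\G|$.

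First I would make the diagonal description from the proof of Lemma~\ref{lem:jordan_rank} quantitative. Parametrize the diagonals of the $(i,j)$-block (of size $\lambda_i\times\lambda_j$) by $d=y-x$, so that $d$ ranges in $-(\lambda_i-1)\le d\le \lambda_j-1$. The Toeplitz-with-corner-support characterization says that the diagonals which can be nonzero in $C_\M(J_\lambda)$ are exactly those with $\max(0,\lambda_j-\lambda_i)\le d\le \lambda_j-1$, giving the expected $\min(\lambda_i,\lambda_j)$ free diagonals per block.

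Next I would identify which diagonals are zeroed by $\G$. The diagonals meeting the right boundary of the block are those with $d\in[\lambda_j-\lambda_i,\lambda_j-1]$, and the ``lowest'' $\G_{i,j}$ among them (those closest to the bottom-left corner) are the ones indexed by $d\in[\lambda_j-\lambda_i,\lambda_j-\lambda_i+\G_{i,j}-1]$. Intersecting with the free range $[\max(0,\lambda_j-\lambda_i),\lambda_j-1]$ and applying the defining inequalities $\max(0,\lambda_i-\lambda_j)\le\G_{i,j}\le\lambda_i$ from~\eqref{eq:gap_def}, a short case split on whether $\lambda_i\le\lambda_j$ or $\lambda_i>\lambda_j$ shows that $d_{i,j}=\lambda_i-\G_{i,j}$ in both cases. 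The upper bound $\G_{i,j}\le\lambda_i$ keeps $d_{i,j}$ nonnegative, while the lower bound $\G_{i,j}\ge\lambda_i-\lambda_j$ ensures that when $\lambda_i>\lambda_j$ the first $\lambda_i-\lambda_j$ diagonals ``wasted'' outside the free range are accounted for, so the block-level count collapses cleanly to $\lambda_i-\G_{i,j}$.

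The argument is conceptually straightforward; the main obstacle is purely bookkeeping. One has to pin down sign conventions for ``lowest diagonal touching the right boundary,'' then verify that both halves of~\eqref{eq:gap_def} are used in exactly the right places so that the per-block count is $\lambda_i-\G_{i,j}$ uniformly, with no off-by-one errors at the boundary cases $\G_{i,j}=\max(0,\lambda_i-\lambda_j)$ (recovering $C_\M(J_\lambda)$ on that block) and $\G_{i,j}=\lambda_i$ (killing the block entirely).
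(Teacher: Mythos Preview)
Your proposal is correct and follows essentially the same idea as the paper's proof: both arguments rest on the observation that in the $(i,j)$-block there are exactly $\lambda_i$ diagonals touching the right boundary, of which $\G_{i,j}$ are forced to vanish and the remaining $\lambda_i-\G_{i,j}$ are free (the lower bound in~\eqref{eq:gap_def} guaranteeing that none of the surviving diagonals were already zero in $C_\M(J_\lambda)$). The only difference is packaging: the paper phrases this as a global complementary count, noting that $\abs{\G}+\dim C(\G)$ is a constant depending only on $\lambda$ and then evaluating that constant at the extremal array $\G_{i,j}=\lambda_i$, whereas you compute the per-block contribution $d_{i,j}=\lambda_i-\G_{i,j}$ directly via the case split on $\lambda_i\lessgtr\lambda_j$ and then sum.
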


\begin{proof}
    Observe that $\G_{i,j}$ counts the number of diagonals touching the rightmost column of the $(i,j)$-block which are zero in every element of $C(\G)$.
    Thus, $\abs{\G}$ counts the number of diagonals in all blocks which are necessarily zero.
    The quantity $\dim C(\G)$ counts the number of diagonals in all blocks which may be non-zero.
    This constitutes all diagonals, so $\abs{\G}+\dim C(\G)$ is a constant depending only on the type~$\lambda$ of $\G$.
    Taking $\dim C(\G)=0$, by setting $\G_{i,j}=\lambda_i$, we see that~$\abs{\G}=\sum_{i,j}\lambda_i=n\ell$.
    Hence, for every gap arary $\G$ of type $\lambda$,
    \begin{equation}\label{eq:gap_dim}
        \dim C(\G)=n\ell-\abs{\G}.
    \end{equation}
\end{proof}

\subsection{Combinatorial operations on gap arrays}\label{subsec:ops}
Recall from Section \ref{sec:jordan_forms_and_conj} that we can put a matrix $A$ into Jordan form by iteratively conjugating larger and larger submatrices into Jordan form.
As we do so, we conjugate $C_\U(A)$ by the same process.
The purpose of this section is to translate what happens to $C_\U(A)$ in this iterative procedure into the combinatorics of gap arrays.

For a partition $\lambda$, and for $r\in\{1,\dots,\ell(\lambda)\}$, let $\phi_r(\lambda)$ denote the partiton obtained by adding a block to the diagram for $\lambda$ into row $r$.
If $\lambda_r=\lambda_{r-1}$, then the result will be a composition and no longer a partition, so we reorder the rows to obtain a partition.
For example, if $\lambda=(3^12^21^4)$, then $\phi_6(\lambda)=(3^12^31^3)$.
If~$r=\ell(\lambda)+1$, then we define $\phi_r(\lambda)$ to be the partition obtained by adding a part of size 1.

For a gap array $\G$ of type $\lambda=(\lambda_1,\dots,\lambda_\ell)$, we define a new gap array $\psi_r(\G)$ of type $\phi_r(\lambda)$.
Define $\psi_r(\G)$ as follows:
\begin{enumerate}
    \item If $r=\ell+1$, add a new row of all zeros at the bottom of $\G$, and a new column at the right of $\G$ with the values $(\lambda_1,\dots,\lambda_\ell,0)$.
    \item Subtract 1 from each non-zero entry in column $r$ of $\G$.
    \item Add 1 to each entry in row $r$.
    \item Permute the rows and columns so that corresponding block sizes are in decreasing order.
        Specifically, move row and column $r$ to be the very first row and column of their particular block size. Keep all other rows/columns in the same relative order.
\end{enumerate}

\begin{prop}
    Let $\G$ be a gap array of type $\lambda$, and let $r\in\{1,\dots,\ell(\lambda)+1\}$. Then $\psi_r(\G)$ is a gap array of type $\phi_r(\lambda)$.
\end{prop}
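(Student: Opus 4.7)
The plan is to verify the defining inequalities $\max\{0, \mu_i - \mu_j\} \le \H_{i,j} \le \mu_i$ for every $(i,j)$, where $\H := \psi_r(\G)$ and $\mu := \phi_r(\lambda)$. Since step 4 only reorders rows and columns in tandem with the reindexing that turns the intermediate composition into a bona fide partition, it preserves the gap array condition and can be postponed to the very end. I would therefore work with the matrix obtained immediately after step 3, check the bounds entry by entry, and split the argument into the two cases $r \le \ell$ and $r = \ell+1$.

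In the case $r \le \ell$, we have $\mu_i = \lambda_i$ for $i \ne r$ and $\mu_r = \lambda_r + 1$, and there are four types of entries to handle. When $i \ne r$ and $j \ne r$, the entry is unchanged and the inequalities transfer directly from $\G$. When $i = r, j \ne r$, we have $\H_{r,j} = \G_{r,j} + 1$; the upper bound follows from $\G_{r,j} \le \lambda_r$, and the lower bound $\max\{0, \lambda_r + 1 - \lambda_j\} \le \G_{r,j} + 1$ is handled by splitting on the sign of $\lambda_r - \lambda_j$ and invoking the gap condition for $\G$. When $i \ne r, j = r$, we have $\H_{i,r} = \max\{0, \G_{i,r} - 1\}$; the only subtle case is $\G_{i,r} = 0$, where the gap condition $\G_{i,r} \ge \max\{0, \lambda_i - \lambda_r\}$ forces $\lambda_i \le \lambda_r$, and hence the new lower bound $\max\{0, \lambda_i - \lambda_r - 1\}$ is zero. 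Finally, $\H_{r,r}$ equals either $\G_{r,r}$ or $1$ according to whether $\G_{r,r}$ is positive, and both values lie in $[0, \lambda_r + 1]$.

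For $r = \ell + 1$, step 1 produces an $(\ell+1) \times (\ell+1)$ matrix whose new column has entries $(\lambda_1, \dots, \lambda_\ell, 0)$ and whose new row is zero, with new partition $\mu = (\lambda_1, \dots, \lambda_\ell, 1)$. After steps 2 and 3 the new column entries become $\lambda_i - 1 = \mu_i - \mu_{\ell+1}$ for $i \le \ell$ (with a $1$ in the bottom-right corner), and the new row becomes all $1$'s; these are precisely the required lower and upper bounds. The main obstacle throughout is the $(i,r)$ entries at which $\G_{i,r}$ is small: decreasing by $1$ risks breaking the lower bound, so the argument must carefully exploit the gap condition on $\G$ to conclude that the new lower bound $\max\{0, \lambda_i - \lambda_r - 1\}$ has already collapsed to zero in exactly the boundary case $\G_{i,r} = 0$. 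Once this is settled, the remaining verifications are bookkeeping.
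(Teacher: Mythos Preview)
Your proposal is correct and follows essentially the same approach as the paper: both arguments observe that the permutation in step~4 can be ignored (since it matches the reindexing of the partition) and then verify the defining inequalities \eqref{eq:gap_def} entry by entry. The paper's version is simply more condensed---rather than splitting into the four positional cases and the two cases $r\le\ell$ versus $r=\ell+1$, it packages everything into the two observations that an entry increases only when $i=r$ (and then $\lambda_r$ increases too, preserving the upper bound) and decreases only when $j=r$ (and then $\lambda_i-\lambda_j$ decreases too, preserving the lower bound).
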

\begin{proof}
    First note that to obtain $\psi_r(\G)$, the rows and columns of $\G$ are rearranged according to the same permutation as the entries in $\lambda$ when computing $\phi_r(\lambda)$.
    We can therefore ignore the permutations and check that that the entries in $\psi_r(\G)$ satisfy the inequalities given in \eqref{eq:gap_def}.

    Note that $\G_{i,j}$ will be increased (by one) if and only if $i=r$.
    Because $\lambda_r$ is also increased by one, the upper bound given in \eqref{eq:gap_def} will still be satisfied.
    From the definition of $\psi_r$, we see that no entry in $\psi_r(\G)$ can be negative.
    Hence, the only way an inequality from \eqref{eq:gap_def} can fail to be satisfied is if $\G_{i,j}$ is decreased (by one).
    However, in this situation, it must be that $j=r$, and so the lower bound of $\lambda_i-\lambda_j$ is also decreased by one.
    Thus, the inequalities from \eqref{eq:gap_def} are always preserved, and so $\psi_r(\G)$ is a gap array of type $\phi_r(\G)$.
\end{proof}

\begin{exmp}
    Starting with $\lambda=(3,2,1)\vdash 6$, we apply $\phi_2$, $\phi_4$, and $\phi_4$ in that order.
    We obtain 
    \begin{align*}
        \phi_2(\lambda) &= (3,3,1)\\
        \phi_4(\phi_2(\lambda))&= (3,3,1,1)\\
        \phi_4(\phi_4(\phi_2(\lambda))) &= (3,3,2,1).
    \end{align*}
    Correspondingly, for a gap array, let \[\G=\begin{pmatrix}
            1 & 1 & 2\\
            1 & 1 & 1\\
            0 & 0 & 1
    \end{pmatrix}.\]
    Applying $\psi_2$, $\psi_4$, and $\psi_4$ in that order to $\G$, we obtain
    \[\psi_2(\G)=\begin{pmatrix}
            1 & 2 & 2\\
            0 & 1 & 2\\
            0 & 0 & 1
        \end{pmatrix},\hspace{5mm}
        \psi_4(\psi_2(\G))=\begin{pmatrix}
            1 & 2 & 2 & 2\\
            0 & 1 & 2 & 2\\
            1 & 1 & 1 & 1\\
            0 & 0 & 0 & 1
    \end{pmatrix},\text{ and}\]
    \[\psi_4(\psi_4(\psi_2(\G)))=\begin{pmatrix}
            1 & 2 & 1 & 2\\
            0 & 1 & 1 & 2\\
            1 & 1 & 1 & 1\\
            1 & 1 & 0 & 1
    \end{pmatrix}.\]

\end{exmp}

\begin{defn}
    Let $\lambda\vdash n$, and let $r\in\{1,\dots,\ell(\lambda)\}$.
    Then a gap array $\G$ of type $\lambda$ is called \emph{$r$-valid} if the following two conditions hold
    \begin{enumerate}
        \item Row $r$ is element-by-element weakly larger than every row below it.
            That is, for every $j>r$, and every $k$, \[\G_{j,k}\le \G_{r,k}.\]
        \item Column $r$ is element-by-element weakly smaller than every column to the right.
            That is, for every $j>r$ and every $k$, \[\G_{k,r}\le \G_{k,j}.\]
    \end{enumerate}
\end{defn}
Validity is a technical condition which will be needed in Lemma \ref{lem:induct_gap_build}.
The most important property to recognize is that a gap array is $r$-valid for every $r$ if and only if its entries are increasing from left to right and from bottom to top.

We need one more definition to state Lemma \ref{lem:induct_gap_build}.
For a subspace $V\subseteq \M_{(n-1)\times(n-1)}$, define 
\[\overline{V}:=\{X\in\M_{n\times n}\mid X|_{n-1}\in V, X_{n,i}=0\text{ for all }i\le n-1\}.\]
Graphically, we are considering the subspace shown in Figure \ref{fig:overline_space}.
We are now ready to state and prove our main lemma regarding gap arrays.
\begin{figure}[h]
    \begin{tikzpicture}[scale=0.5]
        \draw (0, 0) -- (0, 5) -- (5,5) -- (5,0) -- (0,0);
        \draw (4,5) -- (4,1);
        \draw (0, 1) -- (5,1);
        \node at (2.5, 0.5) {0};
        \node at (4.5, 3) {$\ast$};
        \node at (2, 3) {$V$};
    \end{tikzpicture}
    \caption{A graphical representation of $\overline{V}$, where $V\subseteq\M_{(n-1)\times(n-1)}$.}
    \label{fig:overline_space}
\end{figure}

\begin{lem}\label{lem:induct_gap_build}
    Let $A\in\U_n$ and let $\mu\vdash(n-1)$ such that $A|_{n-1}=J_\mu$.
    Let $\G$ be an $r$-valid gap array of type $\mu$.
    Then \[Y_A \left(\overline{C(\G)}\cap C_{\M}(A)\right) Y_A^{-1}=C(\psi_r(\G)).\]
\end{lem}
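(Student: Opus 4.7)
My plan is to factor $Y_A=\tau_A\sigma_A L_A\Delta_A E_A$ and track the image of $V:=\overline{C(\G)}\cap C_\M(A)$ under each of the five factors, matching each factor to one of the four moves defining $\psi_r$. First I would parametrize $V$: an element $M\in V$ is determined by the top-left block $M':=M|_{n-1}\in C(\G)$ together with the last column $v=(M_{1,n},\dots,M_{n-1,n})^T$, subject to $MA=AM$. Since $A|_{n-1}=J_\mu$, the commutator relation decomposes into (i) $M'J_\mu=J_\mu M'$, already ensured by $M'\in C(\G)\subseteq C_\M(J_\mu)$, and (ii) a linear system relating $v$ and the last column $w$ of $A$. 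In the trivial case where $w=0$ (so $r=\ell(\mu)+1$), the four matrices $E_A,\Delta_A,L_A,\sigma_A$ are the identity, and $\tau_A$ simply places a size-$1$ block at the correct position; one checks directly that the resulting space is $C(\psi_r(\G))$, where $\psi_r$ reduces to move (1). So I will assume $w\neq 0$.

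Next I would analyze the five factors in order. Conjugation by $E_A$ acts trivially on $M'$ (its only nonidentity entries lie strictly above column $n$) and rewrites $v$ so that only the positions $(n,\tilde\mu_k)$ may be nonzero; then $\Delta_A$ rescales the last row and column, normalizing the first nonzero entry of $v$ to $1$. Neither of these alters the zero-pattern of diagonals inside the $(n-1)\times(n-1)$ part, so the gap-array structure of the block matrix is unaffected. The crucial algebraic step is conjugation by $L_A=\prod_{j>r}F_{j,r}(-A^{[2]}_{n,\tilde\mu_j})$. Using formula \eqref{eq:lower}, left-multiplication by $F_{j,r}(\alpha)$ adds a scalar multiple of the last $\mu_j$ rows of block~$r$ to block~$j$, and right-multiplication subtracts a scalar multiple of the columns of block~$j$ from the last $\mu_j$ columns of block~$r$. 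Applied to $M$, this turns the previously constrained entries of $v$ into a compatibility condition between the $(j,r)$- and $(r,j)$-blocks of $M'$, which combined with the existing gap constraints from $\G$ transforms column $r$ and row $r$ of $\G$ in the expected way.

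Then I would apply the cycle $\sigma_A=(\tilde\mu_r+1,\dots,n)$, which inserts the (now essentially standardized) column $n$ as a new rightmost column of block~$r$, extending that block from size $\mu_r$ to size $\mu_r+1$. After this step the matrix is block-diagonal of Jordan type $\phi_r(\mu)$ but with blocks possibly out of order, and the diagonal-zero pattern on each block corresponds to $\G$ with column~$r$ having each nonzero entry decremented by one (the right edge of block~$r$ moved one step right, shrinking the gap) and row~$r$ having each entry incremented by one (new constraints along the newly-extended row-block). These are precisely moves (2) and (3) of $\psi_r$. Finally $\tau_A$ implements the block reordering, which is exactly move (4).

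The main obstacle is verifying the $L_A$--$\sigma_A$ step in detail: one must check that after this combined conjugation the $(i,r)$- and $(r,j)$-blocks of the result are again Toeplitz with the correct number of zero diagonals touching the right edge, and that no spurious non-Toeplitz entries appear. I expect this to reduce to a direct computation in block coordinates using \eqref{eq:lower} and the Toeplitz description from Lemma~\ref{lem:jordan_rank}; the hypothesis that $\G$ is $r$-valid enters precisely here, ensuring that the rows and columns of $\G$ being compared have compatible gap sizes so that the constraints produced by $L_A$ do not collide with the existing ones in $\G$ and the modified entries still satisfy the inequalities \eqref{eq:gap_def}. Once this verification is done, composing the four resulting combinatorial moves yields $C(\psi_r(\G))$, as claimed.
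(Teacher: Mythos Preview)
Your overall plan—factor $Y_A$ and track the image through each of the five factors—is the same as the paper's. But you have misidentified what two of the factors actually do, and as a result you have misplaced where $r$-validity enters.

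First, your claim that conjugation by $E_A$ ``rewrites $v$ so that only the positions $(\tilde\mu_k,n)$ may be nonzero'' is incorrect: that is what $E_A$ does to the last column of $A$, not to the last column of an arbitrary $M\in V$. Right-multiplication by $E_A^{-1}$ adds to column~$n$ of $M$ a linear combination of the earlier columns of $M$ (which come from $M'\in C(\G)$), and there is no reason this clears the non-$\tilde\mu_k$ entries of $v$.

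Second, and more importantly, you attribute to $L_A$ a transformation of column~$r$ and row~$r$ of the gap array. In fact $L_A$ does \emph{not} change the gap structure at all: the key point is that conjugation by each $F_{j,r}(\alpha)$, and hence by $L_A$, \emph{normalizes} $\overline{C(\G)}$. This is exactly where $r$-validity is used. The row condition $\G_{j,k}\le\G_{r,k}$ for $j>r$ guarantees that left-multiplication by $F_{j,r}(\alpha)$ (which adds the bottom $\mu_j$ rows of block~$(r,k)$ into block~$(j,k)$) keeps $M'$ inside $C(\G)$; the column condition $\G_{k,r}\le\G_{k,j}$ does the same for right-multiplication by $F_{j,r}(\alpha)^{-1}$. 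Since $E_A$ and $\Delta_A$ also normalize $\overline{C(\G)}$ (they only touch the last row and column), the entire product $L_A\Delta_A E_A$ normalizes $\overline{C(\G)}$, and the problem collapses to computing
\[
(\tau_A\sigma_A)\,\overline{C(\G)}\,(\tau_A\sigma_A)^{-1}\;\cap\;C_\M(J_{\phi_r(\mu)}).
\]
Only now do the $\pm1$ changes appear: $\sigma_A$ appends the all-zero row~$n$ to the bottom of each $(r,j)$-block, which increases every gap in row~$r$ by one; and $\sigma_A$ appends the free column~$n$ to the right of each $(i,r)$-block, which together with the Toeplitz constraint from $C_\M(J_{\phi_r(\mu)})$ decreases every nonzero gap in column~$r$ by one. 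Your paragraph on $\sigma_A$ already says essentially this, so the fix is to redirect the $L_A$ analysis from ``transforming constraints'' to ``verifying normalization,'' and to drop the incorrect description of how $E_A$ acts on $v$.
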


\begin{proof}
    We first consider $Y_A\overline{C(\G)}Y_A^{-1}$.
    Note that for any subspace $V\subseteq \M_{(n-1)\times(n-1)}$, the matrix $E_A$ normalizes $\overline V$, as its action only changes the last column, and fixes the bottom-right entry.
    Similarly, $\Delta_A$ normalizes $\overline V$. 
    Hence, \[Y_A\overline{C(\G)}Y_A^{-1}=(\tau_A\sigma_AL_A)\overline{C(\G)}(\tau_A\sigma_AL_A)^{-1}.\]
    
    Next, we claim that $L_A$ normalizes $\overline{C(\G)}$.
    Let $A_{[i,j]}$ denote the $(\mu_i\times\mu_j)$-matrix obtained by restricting $A$ to the $(i,j)$-block.
    Recall that $L_A$ is defined in \eqref{eq:lower} to be the product of the lower-triangular matrices of the form $F_{j,r}(\alpha)$.
    Let $B\in \overline{C(\G)}$. Then by construction, $F_{j,r}(\alpha)\cdot B$ agrees with $B$ on all blocks except for those of the form $B_{[j,k]}$ for some $k$.
    On such blocks, the action of left-multiplication by $F_{j,r}(\alpha)$ is to add the bottom $\lambda_j$ rows of $\alpha\cdot B_{[r,k]}$ to $B_{[j,k]}$.
    Because $\G$ is $r$-valid, and $j>r$, we know that $\G_{j,k}\le \G_{r,k}$ for all $k$.
    Thus, every diagonal of non-zero entries in $\alpha\cdot B_{[r,k]}$ gets added to a diagonal in $B_{[j,k]}$ which is allowed to be non-zero.
    As a result, left-multiplication by $F_{j,r}(\alpha)$ stabilizes $\overline{C(\G)}$.

    This is shown graphically by the left diagram in Figure \ref{fig:validity_exmp}.
    The light-gray strips represent the rows and columns of the relevant blocks.
    The vertical strip represents the $(\ast,k)$-blocks, and the horizontal strips represent the $(r,\ast)$- and $(j,\ast)$-blocks.
    The dark-gray triangles represent those diagonals in each block which may be non-zero.
    The black triangle shows the cells in the $(r,k)$-block which get carried to the $(j,k)$-block via the left-multiplication by $F_{j,r}(\alpha)$.

    \begin{figure}[h]
        \centering
        \begin{tikzpicture}[scale=0.04]
            \definecolor{light-gray}{gray}{0.85}

            \draw (0, 0) -- (0, 100) -- (100, 100) -- (100, 0) -- (0, 0);

            \draw[fill=light-gray] (50, 0) rectangle (70, 100);
            \draw[fill=light-gray] (0, 7) rectangle (100, 22);
            \draw[fill=light-gray] (0, 60) rectangle (100, 83);
            \draw (50, 0) -- (50, 100);
            \draw (70, 0) -- (70, 100);
 
            \draw[fill=gray] (70, 67) -- (70, 83) -- (54, 83) -- (70, 67);
            \draw[fill=black] (70, 67) -- (70, 75) -- (62, 75) -- (70, 67);
            \draw[fill=gray] (70, 12) -- (70, 22) -- (60, 22) -- (70, 12);
            
            \draw [decorate,decoration={brace,amplitude=1pt},xshift=1pt,yshift=0pt] (69, 61) -- (69, 66);
            \node at (61, 63) {\tiny$\G_{r,k}$};

            \draw [decorate,decoration={brace,amplitude=1pt},xshift=1pt,yshift=0pt] (71,11.5) -- (71, 7.5);
            \node at (80, 9.5) {\tiny$\G_{j,k}$};

            \draw [decorate,decoration={brace,amplitude=2pt},xshift=2pt,yshift=0pt] (71,74) -- (71, 61);
            \node at (79, 67) {\tiny$\lambda_j$};
 
            \node at (57, 72) {\tiny0};
            \node at (60, 15) {\tiny0};
        \end{tikzpicture}
        \begin{tikzpicture}[scale=0.04]
            \definecolor{light-gray}{gray}{0.85}

            \draw (0, 0) -- (0, 100) -- (100, 100) -- (100, 0) -- (0, 0);

            \draw[fill=light-gray] (0,50) rectangle (100, 70);
            \draw[fill=light-gray] (7,0) rectangle (30,100);
            \draw[fill=light-gray] (60,0) rectangle (75,100);

            \draw[fill=gray] (14, 70) -- (30, 70) -- (30, 54) -- (14, 70);
            \draw[fill=black] (20, 70) -- (30, 70) -- (30, 60) -- (20, 70);
            \draw[fill=gray] (65, 70) -- (75, 70) -- (75, 60) -- (65, 70);
            \draw (0, 50) -- (100, 50);
            \draw (0, 70) -- (100, 70);

            \draw [decorate,decoration={brace,amplitude=1pt},xshift=1pt,yshift=0pt] (29,50.5) -- (29, 54);
            \node at (20, 52.8) {\tiny$\G_{k,r}$};

            \draw [decorate,decoration={brace,amplitude=2pt},xshift=2pt,yshift=0pt] (31,59) -- (31, 50.5);
            \node at (41, 54) {\tiny$\G_{k,j}$};

            \draw [decorate,decoration={brace,amplitude=2pt},xshift=2pt,yshift=0pt] (76,59) -- (76, 50.5);
            \node at (86, 54) {\tiny$\G_{k,j}$};
            \node at (67, 57) {\tiny0};
            \node at (15, 60) {\tiny0};
        \end{tikzpicture}

        \caption{A graphical representation of how $r$-validity implies that conjugation by $F_{j,r}(\alpha)$ stabilizes $\overline{C(\G)}$.}
        \label{fig:validity_exmp}
    \end{figure}
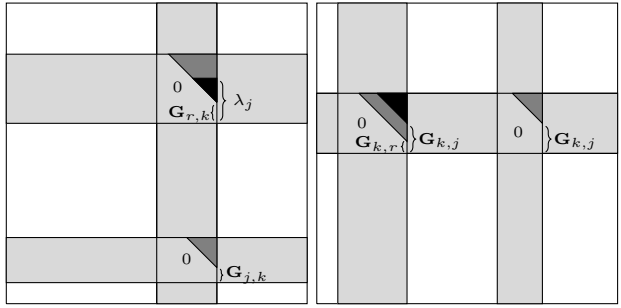

    Similarly, right-multiplication by $F_{j,r}(\alpha)^{-1}$ maps $\overline{C(\G)}$ to itself as a consequence of the column-condition of $r$-validity.
    For each $k$, the action of right-multiplication by~$F_{j,r}(\alpha)^{-1}$ subtracts $\alpha\cdot B_{[k,j]}$ from $B_{[k,r]}$, where the right edges of each block are aligned, and $\alpha\cdot B_{[k,j]}$ is extended to the left with zeros if necessary.
    Because $\G$ is $r$-valid, and~$j>r$, we know that $\G_{k,r}\le \G_{k,j}$ for all $k$.
    Thus, every diagonal of non-zero entries in $B_{[k,j]}$ gets subtracted from a diagonal in $B_{[k,r]}$ which is allowed to be non-zero.

    This is shown graphically by the diagram on the right in Figure \ref{fig:validity_exmp}.
    The dark-gray triangles represent those diagonals in each block which may be non-zero.
    The black triangle shows where the right-multiplication by $F_{j,r}(\alpha)^{-1}$ carries the triangle from the $(k,j)$-block.
    Because~$\G_{k,j}\ge \G_{k,r}$, the black triangle lies inside the dark-gray triangle in the $(k,r)$-block.
    Hence, right-multiplication by $F_{j,r}(\alpha)^{-1}$ stabilizes $\overline{C(\G)}$.
    In this way, we see that $r$-validity is a combinatorial description of the fact that~$F_{j,r}(\alpha)$ normalizes~$\overline{C(\G)}$.
    It follows that $Y_A\overline{C(\G)}Y_A^{-1}=(\tau_A\sigma_A)\overline{C(\G)}(\tau_A\sigma_A)^{-1}$.

    Next, recognize that $\sigma_A$ moves the last row and column into the $r$th block, effectively increasing the size of the $r$th block by 1.
    Then $\tau_A$ acts by rearranging these blocks, guaranteeing that their sizes are in descending order.
    These permutations have the same action on the block sizes as the function $\phi_r$ does to the parts of $\mu$.
    Thus, the block sizes are now described by the partition $\phi_r(\mu)$.
    Hence, \begin{equation}\label{eq:subspace}
        Y_A\left(\overline{C(\G)}\cap C_\M(A)\right)Y_A^{-1}=(\tau_A\sigma_A)\overline{C(\G)}(\tau_A\sigma_A)^{-1}\cap C_\M(J_{\phi_r(\mu)}).
    \end{equation}

    The left-hand side of \eqref{eq:subspace} is a subspace of $C_\M(J_{\phi_r(\mu)})$.
    Therefore every block must be Toeplitz (constant on diagonals), and any diagonal not touching both the topmost row and rightmost column of a block must be zero.
    We now consider how the action of $\tau_A\sigma_A$ affects each block.
    Because $\tau_A$ only permutes the blocks, we focus our attention on the action of $\sigma_A$.
    If $i\ne r$ and $j\ne r$, then $\sigma_A$ has no affect on an $(i,j)$-block.
    This coincides with the fact that $\psi_r$ does not affect cells $(i,j)$ in a gap array when $i\ne r$ and $j\ne r$.

    Considering an $(r,j)$-block for $j\ne r$, we see that $\sigma_A$ moves the last row to the bottom of such a block.
    As this row is necessarily zero, it increases the gap on an $(r,j)$-block by 1.
    This is precisely what the map $\psi_r$ encodes by adding 1 to each entry in row $r$ of the gap array.

    Considering an $(i,r)$-block, for $i\ne r$, we see that $\sigma_A$ moves the last column to the right edge of this block.
    Because the result is guaranteed to be Toeplitz, the size of the gap can only decrease by 1, as shown in Figure~\ref{fig:decr_by_one}.
    If the gap size on an $(i,r)$-block was already zero in $C(\G)$, then such matrices cannot possibly lie in~$Y_A\left(\overline{C(\G)}\cap C_\M(A)\right)Y_A^{-1}$, because the action of $\sigma_A$ would force the $(i,r)$-block to no longer satisfy the upper-triangular conditions necessary to lie in $C_\M(J_{\phi_r(\mu)})$.
    Thus, the resulting block will have its gap size decreased by 1 unless it is already zero, in which case, the gap size remains zero.
    This is encoded by the action of $\psi_r$ on the $r$th column of~$\G$.
    \begin{figure}
        \begin{tikzpicture}[scale=0.03]
            \definecolor{light-gray}{gray}{0.85}
            \draw[fill=lightgray] (100, 20) -- (90, 20) -- (90, 30) -- (80, 30) -- (80, 40) -- (70, 40) -- (70, 50) -- (60, 50) -- (60, 60) -- (90, 60) -- (90, 20);
            \draw[fill=gray] (100, 60) -- (90, 60) -- (90, 20) -- (100, 20) -- (100, 60);
            \draw (0, 0) -- (0, 60) -- (100, 60) -- (100, 0) -- (0,0);
            \draw (90, 0) -- (90, 60);
            \draw [decorate,decoration={brace,amplitude=2pt},xshift=2pt,yshift=0pt] (88, 1) -- (88, 29);
            \node at (75, 15) {\tiny$\G_{i,r}$};
            \draw [decorate,decoration={brace,amplitude=2pt},xshift=2pt,yshift=0pt] (102, 19) -- (102, 1);
            \node at (123, 9) {\tiny$\G_{i,r} - 1$};

        \end{tikzpicture}
        \caption{A graphical representation of an $(i,r)$-block is affected by the action of $\sigma_A$ (for $i\ne r$).}
        \label{fig:decr_by_one}
    \end{figure}
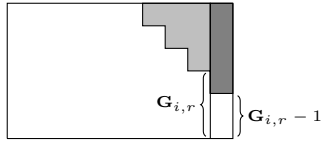

    Regarding the $(r,r)$-block, both its width and height are increased by 1.
    As the resulting space is guaranteed to be Toeplitz, the gap size does not change.
    This is consistent with $\psi_r(\G)$. Thus, it follows that $Y_A\left( \overline{C(\G)}\cap C_\M(A) \right)=C(\psi_r(\G))$ as desired.
\end{proof}

\section{Proof of Theorem \ref{thm:conj_bound}}\label{sec:proof}
\subsection{Overview of proof methods for Theorem \ref{thm:conj_bound}.}
In this section, we prove Theorem \ref{thm:better_conj_bound}, which trivially implies Theorem \ref{thm:conj_bound}.
Our proof proceeds as follows.
We first construct, for each $\lambda\vdash n$, a gap array $\G^\lambda$ that encodes a particular subspace of~$C_\M(J_\lambda)$.
The space $C(\G^\lambda)$ contains every space of the form $X_AC_\U(A)X_A^{-1}$, where $\shape(A)=\lambda$.
This is statement is made precise in Theorem~\ref{thm:worst_gap}.
The gap array $\G^\lambda$ is particularly easy to analyze combinatorially.
We compute $\abs{\G^\lambda}$ in Proposition~\ref{prop:g_bound}.
We then show that the worst possible choice $\lambda$ gives an exponent of $\frac{1}{6}n^2$.
The content of the proof of Theorem~\ref{thm:better_conj_bound} is to combine this technique with a simpler bound in order to improve this exponent.

Recall the notation $\comm(\U_n)=\{(A,B)\in\U_n\times\U_n\mid AB=BA\}$.
We introduce two families of subspaces of $\comm(\U_n)$ which we parameterize with partitions.
For partitions $\lambda,\mu\vdash n$, define 
\begin{align*}
    \comm(\lambda) &:=\{(A,B)\in\U_n\times\U_n\mid AB=BA,\ \shape(A)=\lambda\},\text{ and}\\
    \comm(\lambda,\mu) &:=\{(A,B)\in\U_n\times\U_n\mid AB=BA,\ \shape(A)=\lambda,\ \shape(B)=\mu\}.
\end{align*}

We improve on the $\tfrac{1}{6}n^2$ exponent by first proving a technical lemma (Lemma~\ref{lem:asymptotic_technicalities}) regarding the bounds on each $\comm(\lambda)$.
This lemma allows us to determine when gap arrays will be useful, and when we should bound $\kk(U_n)$ via different, but simpler, techniques.

\subsection{Bounds on gap arrays}
For any partition $\lambda\vdash n$, there is a specific gap array whose corresponding subspace of $C_{\M}(J_\lambda)$ contains all possible subspaces coming from upper-triangular matrices.
Moreover, we can construct this gap array explicitly
\begin{defn}
    For $\lambda\vdash n$, we define the gap array $\G^\lambda$ by
    \[\G^\lambda_{i,j}=\begin{cases}
            \lambda_i-\lambda_j & \text{if }\lambda_i>\lambda_j\\
            1 & \text{if }\lambda_i=\lambda_j\text{ and }i\le j\\
            0 & \text{otherwise.}
    \end{cases}\]
\end{defn}

\begin{exmp}
    For the partition $\lambda=(6,3,1,1,1)$, we have 
    \[\G^\lambda=\begin{pmatrix}
            1 & 3 & 5 & 5 & 5\\
            0 & 1 & 2 & 2 & 2\\
            0 & 0 & 1 & 1 & 1\\
            0 & 0 & 0 & 1 & 1\\
            0 & 0 & 0 & 0 & 1
    \end{pmatrix}.\]
\end{exmp}

\begin{prop}\label{prop:worst}
    For every $\lambda\vdash n$, the gap array $\G^\lambda$ is $r$-valid for every $r$, and \[\G^{\phi_r(\lambda)}\le\psi_r(\G^\lambda).\]
\end{prop}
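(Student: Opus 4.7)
The plan is to prove the two assertions separately, both by direct case analysis from the piecewise definition of $\G^\lambda$.

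For the $r$-validity claim, I appeal to the observation following the definition of validity: $\G$ is $r$-valid for every $r$ if and only if each entry is weakly larger than the entry immediately below it and weakly smaller than the entry immediately to its right. So it suffices to show that $\G^\lambda_{i,j}$ is weakly decreasing in $i$ and weakly increasing in $j$. Since $\lambda_1 \ge \lambda_2 \ge \cdots$, both monotonicities reduce to inspecting the transitions between the three branches of the formula (namely $\lambda_i>\lambda_j$, $\lambda_i=\lambda_j$ with $i\le j$, and the rest) as one increments the relevant index; this is a routine check of a handful of boundary cases.

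For the inequality $\G^{\phi_r(\lambda)}\le\psi_r(\G^\lambda)$, write $\tilde\lambda:=\phi_r(\lambda)$ and let $s$ denote the position in $\tilde\lambda$ of the promoted part $\lambda_r+1$. By step~4 of the definition of $\psi_r$, the permutation reordering the rows and columns of the auxiliary array coincides with the reordering implicit in passing from $\lambda$ to $\tilde\lambda$, so positions in $\psi_r(\G^\lambda)$ and in $\G^{\tilde\lambda}$ are indexed compatibly. I then compare the two arrays entry-by-entry in four cases, according to whether the row and column indices equal $s$.

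When $i,j\neq s$, the permutation preserves the relative order of non-$s$ indices, so the three branches of the piecewise formula align for $\G^\lambda_{i',j'}$ and $\G^{\tilde\lambda}_{i,j}$ (where $i',j'$ denote the preimages), giving equality. For the row case $i=s$, $j\neq s$, the map $\psi_r$ has added $1$ to the row-$r$ entry of $\G^\lambda$, and a subcase check on whether $\lambda_r$ is strictly greater than, equal to, or strictly less than $\lambda_{j'}$ shows that the resulting value always bounds $\G^{\tilde\lambda}_{s,j}$ from above, with equality precisely when $\lambda_r>\lambda_{j'}$. The column case $j=s$, $i\neq s$ is analogous except $\psi_r$ subtracts $1$ from non-zero entries; the delicate subcase is $\lambda_{i'}=\lambda_r+1$, where $(\psi_r(\G^\lambda))_{i,s}$ drops to $0$ while $\G^{\tilde\lambda}_{i,s}$ would be $1$ if $i\le s$. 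Here one uses that $i'<r$ (forced by $\lambda_{i'}>\lambda_r$) together with the fact that step~4 places the promoted part at the \emph{first} index of its block-size class, so that $i=i'+1>s$, and the second branch of the definition of $\G^{\tilde\lambda}$ gives $0$ as needed. Finally the $(s,s)$ entry is a one-line arithmetic check yielding equality, and the degenerate case $r=\ell(\lambda)+1$ is handled along the same lines, with step~1 of $\psi_r$ supplying the new row and column.

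The principal obstacle is the column-$s$ subcase $\lambda_{i'}=\lambda_r+1$: subtracting $1$ from a $1$-valued entry could clash with $\G^{\tilde\lambda}_{i,s}$ unless the two reorderings align so as to place the image of $i'$ strictly after $s$. The ``first position of block-size class'' prescription in step~4 is exactly what forces this alignment, which is why step~4 is formulated the way it is.
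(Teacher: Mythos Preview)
Your proposal is correct and follows essentially the same route as the paper's own proof. Both arguments reduce $r$-validity to the monotonicity characterization, align the permutation in $\psi_r$ with that in $\phi_r$, and then perform the identical four-way case split on whether the row and column indices correspond to the promoted part; in particular, your handling of the delicate column subcase (where $\lambda_{i'}=\lambda_r+1$) via the ``first position of block-size class'' rule is exactly the mechanism the paper invokes.
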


\begin{proof}
    A gap array is $r$-valid for every $r$ if and only if the entries are weakly increasing from left to right and from bottom to top.
    From the definition of $\G^\lambda$, it is clear that~$\G^\lambda$ satisfies this property.
    It therefore suffices to prove that $\G^{\phi_r(\lambda)}\le\psi_r(\G^\lambda)$.

    First suppose that $r=\ell+1$, where $\ell=\ell(\lambda)$.
    Recalling the definition of $\psi_r$ from Section~\ref{subsec:ops}, we see that that $\psi_r(\G^\lambda)$ adds a new column containing the value $\lambda_i-1$ in entry~$(i,\ell+1)$, and a new row of all 1s in row $\ell+1$.
    This row and column are then permuted to be the first row and column corresponding to a part of $\phi_r(\lambda)$ of size 1.
    We may obtain $\G^{\phi_r(\lambda)}$ can be obtained from $\G^\lambda$ by adding a new column containing the number $\lambda_i-1$ in the cell associated to the $i$th part of $\lambda$, and a new row containing 1s and 0s.
    The new columns and rows in these two gap arrays align by construction.
    If the new values in these rows and columns do not agree, it is because a cell has the value 0 in $\G^{\phi_r(\lambda)}$, and the value 1 in $\psi_r(\G^\lambda)$.
    Thus, we obtain $\G^{\phi_r(\lambda)}\le \psi_r(\G^\lambda)$.

    Now suppose $1\le r\le \ell(\lambda)$.
    The map $\psi_r$ acts by subtracting~1 from each non-zero entry in column~$r$, adding 1 to each entry in row~$r$, and then permuting the rows and columns so that this row and column are the first corresponding to a part of size $\lambda_r+1$.
    Let $w$ denote this permutation.
    Let $s$ denote the new index of row/column $r$ after this permutation, and let $\H=\psi_r(\G^\lambda)$.
    Then from the definition of the map $\psi_r$ in Section~\ref{subsec:ops}, we see that
    \begin{enumerate}
        \item $\H_{w(r),w(r)}=1$,
        \item $\H_{w(r),w(j)}=\G^{\lambda}_{r,j}+1$ for $j\ne r$,
        \item $\H_{w(i),w(r)}=\max\{0,\G^{\lambda}_{i,r}-1\}$ for $i\ne r$,
        \item $\H_{w(i),w(j)}=\G^{\lambda}_{i,j}$ for all $i,j\ne r$.
    \end{enumerate}

    Now consider $\G^{\phi_r(\lambda)}$.
    Recall that $\phi_r$ acts on $\lambda$ by increasing the $r$th part, and then permuting the parts so that the resulting composition becomes a permutation again.
    Specifically, this can be done by applying the same permutation $w$ as above.
    In other words, let $\mu:=\phi_r(\lambda)$. Then
    \[\mu_{w(i)}=\begin{cases}
            \lambda_r+1 & \text{if }i=r\\
            \lambda_i & \text{if }i\ne r.
    \end{cases}\]

    We now show that for all $i$ and $j$, we have~$\G^{\mu}_{w(i),w(j)}\le \H_{w(i),w(j)}$.
    We break this into four cases to coincide with the four items defining $\H_{w(i),w(j)}$ above.

    \subsection*{Case 1: $i=j=r$}\hspace{1mm}\\
    It is immediate from the definition of $\G^{\mu}$ that every entry on the main diagonal is equal to 1.
    Thus,
    \[\G^{\mu}_{w(r),w(r)}=1=\H_{w(r),w(r)}.\]

    \subsection*{Case 2: $i=r$, $j\ne r$}\hspace{1mm}\\
    If $\mu_{w(r)}>\mu_{w(j)}$, then $\G^{\mu}_{w(r),w(j)}=\mu_{w(r)}-\mu_{w(j)}$.
    We therefore obtain \begin{align*}
        \G^{\mu}_{w(r),w(j)} &= \mu_{w(r)}-\mu_{w(j)}\\
        &= \lambda_r+1-\lambda_j = \G^{\lambda}_{r,j}+1 = \H^{\lambda}_{w(r),w(j)}.
    \end{align*}

    Otherwise, we see that $\G^{\mu}_{w(r),w(j)}\le 1$, and $\H_{w(r),w(j)}=\G^{\lambda}_{r,j}+1\ge 1$.
    Thus, we have $\G^{\mu}_{w(r),w(j)}\le \H_{w(r),w(j)}$ as desired.

    \subsection*{Case 3: $i\ne r$, $j=r$}\hspace{1mm}\\
    First suppose $\mu_{w(i)}>\mu_{w(r)}$, so that $\G^{\mu}_{w(i),w(r)}=\mu_{w(i)}-\mu_{w(r)}$.
    Thus, we see that~$\lambda_i-(\lambda_r+1)>0$, so $\H_{w(i),w(r)}=\G^{\lambda}_{i,r}-1$.
    We therefore obtain 
    \[\G^{\mu}_{w(i),w(r)} = \mu_{w(i)}-\mu_{w(r)} = \lambda_i-(\lambda_r+1) = \H_{w(i),w(r)}.\]

    Next, suppose that $\mu_{w(i)}=\mu_{w(r)}$, and $w(i)\le w(r)$.
    Recall that $w$ is defined to have the property that $w(r)$ is the first part in $\mu$ of size $\mu_{w(r)}$.
    It follows that if~$\mu_{w(i)}=\mu_{w(r)}$ and $w(i)\le w(r)$, then in fact $w(i)=w(r)$.
    This is not possible, as~$i\ne r$ by hypothesis.

    In the remaining cases, $\G^{\mu}_{w(i),w(r)}=0$.
    We know from the definition of gap arrays that $\H_{w(i),w(r)}\ge 0$ as desired.

    \subsection*{Case 4: $i\ne r$, $j\ne r$}\hspace{1mm}\\
    Recalling that for $i\ne r$, we have $\mu_{w(i)}=\lambda_i$, we see that
    \[\G^{\mu}_{w(i),w(j)}=\G^{\lambda}_{i,j}=\H_{w(i),w(j)}.\]

    Thus, for every $i$ and $j$, we have $\G^\mu_{w(i),w(j)}\le \H_{w(i),w(j)}$, so we obtain the relationship $\G^{\phi_r(\lambda)}\le \psi_r(\G^\lambda)$ as desired.
\end{proof}

\begin{prop}\label{prop:g_bound}
    For any $\lambda\vdash n$ with $\ell=\ell(\lambda)$, we have 
    \[\abs{\G^\lambda}=n\ell-n-2\n(\lambda)+\sum_i\frac{m_i^2}{2}+\frac{\ell}{2}.\]
\end{prop}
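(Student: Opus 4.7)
The plan is a direct computation: split $\abs{\G^\lambda}=\sum_{i,j}\G^\lambda_{i,j}$ according to the three cases in the definition of $\G^\lambda$, and evaluate each piece combinatorially.

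First, I would handle the ``diagonal'' part. The entries equal to $1$ are exactly those at positions $(i,j)$ with $i\le j$ and $\lambda_i=\lambda_j$. Grouping by part-size $s$, each size contributes $\binom{m_s+1}{2}=\tfrac{1}{2}(m_s^2+m_s)$ such positions, so the total contribution from these entries is
\[
\sum_s\binom{m_s+1}{2}=\frac{1}{2}\sum_s m_s^2+\frac{1}{2}\sum_s m_s=\frac{1}{2}\sum_i m_i^2+\frac{\ell}{2},
\]
using $\sum_s m_s=\ell$. This already accounts for the last two terms of the formula.

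Next, I would compute the ``off-diagonal'' part $S:=\sum_{\lambda_i>\lambda_j}(\lambda_i-\lambda_j)$. Because $\lambda$ is weakly decreasing, $\lambda_i>\lambda_j$ forces $i<j$, and conversely for $i<j$ with $\lambda_i=\lambda_j$ the summand is $0$. Hence $S=\sum_{i<j}(\lambda_i-\lambda_j)$. The cleanest way to evaluate this is to read $\lambda_i-\lambda_j$ column-wise in the Young diagram: $\lambda_i-\lambda_j=\#\{k:\lambda_j<k\le\lambda_i\}$, so swapping the order of summation gives
\[
S=\sum_{k\ge 1}\#\{(i,j):i<j,\ \lambda_i\ge k,\ \lambda_j<k\}.
\]
For a fixed column $k$, the rows with $\lambda_i\ge k$ are precisely rows $1,\dots,\lambda'_k$ and the rows with $\lambda_j<k$ are rows $\lambda'_k+1,\dots,\ell$, so the inner count is $\lambda'_k(\ell-\lambda'_k)$. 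Summing,
\[
S=\ell\sum_k\lambda'_k-\sum_k(\lambda'_k)^2=\ell n-\|\lambda'\|^2.
\]

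Finally, I apply identity~\eqref{eq:n_as_ell_two} to rewrite $\|\lambda'\|^2=2\n(\lambda)+n$, giving $S=n\ell-n-2\n(\lambda)$. Adding this to the diagonal contribution computed above yields exactly
\[
\abs{\G^\lambda}=n\ell-n-2\n(\lambda)+\sum_i\frac{m_i^2}{2}+\frac{\ell}{2},
\]
as claimed. There is no real obstacle here; the only thing to be careful about is the case $\lambda_i=\lambda_j$ with $i\le j$, which is why the diagonal sum uses $\binom{m_s+1}{2}$ rather than $\binom{m_s}{2}$, producing the extra $\tfrac{\ell}{2}$ term.
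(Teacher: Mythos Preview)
Your proof is correct and follows essentially the same decomposition as the paper: both split $\abs{\G^\lambda}$ into the contribution from the entries equal to $1$ (giving $\sum_s\binom{m_s+1}{2}=\tfrac{1}{2}\sum m_s^2+\tfrac{\ell}{2}$) and the contribution $S=\sum_{i<j}(\lambda_i-\lambda_j)$ from the strictly upper-triangular differences. The only difference is in how $S$ is evaluated: the paper manipulates the double sum directly over indices to reach $n\ell-n-2\n(\lambda)$, whereas you read $\lambda_i-\lambda_j$ column-by-column in the Young diagram to obtain $S=\ell n-\|\lambda'\|^2$ and then invoke identity~\eqref{eq:n_as_ell_two}. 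This is a minor and equally valid variation, not a genuinely different route.
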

\begin{proof}
    Recall that $\G^\lambda$ is an upper-triangular array, and that for $i\le j$ and $\lambda_i=\lambda_j$, we have~$\G^\lambda_{i,j}=1$.
    Such cells in $\G^\lambda$ contribute $\sum_i\binom{m_i+1}{2}$.
    For $\lambda_i>\lambda_j$, we have~$\G^\lambda_{i,j}=\lambda_i-\lambda_j$.
    Altogether, we have \[\abs{\G^\lambda}=\sum_{i>j}\lambda_i-\sum_{i>j}\lambda_j+\sum_i\binom{m_i+1}{2}.\]
    To compute $\sum_{i>j}\lambda_i$, we instead sum over all possible pairs $(i,j)$, and subtract those for which $i\le j$.
    Summing the term $\lambda_i$ over all possible pairs $(i,j)$ we obtain~$n\ell$.
    Those terms where $i=j$ contribute $\sum_i\lambda_i=n$.
    Thus, we have
    \[\abs{\G^\lambda}=n\ell - n - 2\sum_{i>j}\lambda_j+\sum_i\frac{m_i^2}{2}+\sum_i\frac{m_i}{2}.\]
    From \eqref{eq:n_fn}, we see that $\sum_{i>j}\lambda_j=\sum_j(j-1)\lambda_j=\n(\lambda)$. Furthermore, $\sum_i{m_i}=\ell$, hence
    \[\abs{\G^\lambda}=n\ell - n - 2\n(\lambda)+\sum_i\frac{m_i^2}{2}+\frac{\ell}{2}.\]
\end{proof}

\begin{thm}\label{thm:worst_gap}
    For any $A\in\U_n$ with Jordan form $J_\lambda$, we have
    \[X_AC_{\U}(A)X_A^{-1}\subseteq C(\G^\lambda).\]
\end{thm}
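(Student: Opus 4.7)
The plan is to induct on $n$, using the recursive structure of $X_A$ together with Lemma~\ref{lem:induct_gap_build} and Proposition~\ref{prop:worst}. The base case $n=1$ is immediate: $C_\U(A) = \{0\}$, and $\G^{(1)} = (1)$ forces $C(\G^{(1)}) = \{0\}$ as well.

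For the inductive step, let $B := A|_{n-1}$ and $\mu := \shape(B)$, so that $A'|_{n-1} = J_\mu$ where $A' := X_B A X_B^{-1}$ and $X_A = Y_{A'}X_B$. Fix $M \in C_\U(A)$. Then $M|_{n-1} \in C_\U(B)$ and $M$ has zero bottom row. Conjugating by $X_B \oplus 1$, the top-left $(n-1)\times(n-1)$ block of $X_B M X_B^{-1}$ lies in $X_B C_\U(B) X_B^{-1} \subseteq C(\G^\mu)$ by the inductive hypothesis, while the bottom row is preserved as zero; hence $X_B M X_B^{-1} \in \overline{C(\G^\mu)}$. Conjugation also preserves the commutation relation $MA = AM$, so $X_B M X_B^{-1}$ commutes with $A'$, and therefore lies in $\overline{C(\G^\mu)} \cap C_\M(A')$.

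Next, I would let $r$ be the unique index for which $\phi_r(\mu) = \lambda$; this is exactly the index built into the conjugation procedure, since $\sigma_{A'}$ either extends the $r$th Jordan block of $J_\mu$ by one cell or (if $r = \ell(\mu)+1$) creates a new block of size $1$, and $\tau_{A'}$ then reorders the blocks into descending order, which is precisely the combinatorial action of $\phi_r$ on $\mu$. By Proposition~\ref{prop:worst}, $\G^\mu$ is $r$-valid, so Lemma~\ref{lem:induct_gap_build} (applied to $A'$) gives
\[
    Y_{A'}\bigl(\overline{C(\G^\mu)} \cap C_\M(A')\bigr)Y_{A'}^{-1} = C(\psi_r(\G^\mu)).
\]
Proposition~\ref{prop:worst} also gives $\G^\lambda = \G^{\phi_r(\mu)} \le \psi_r(\G^\mu)$, equivalently $C(\psi_r(\G^\mu)) \subseteq C(\G^\lambda)$. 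Chaining everything together,
\[
    X_A C_\U(A) X_A^{-1} = Y_{A'}\bigl(X_B C_\U(A) X_B^{-1}\bigr)Y_{A'}^{-1} \subseteq C(\psi_r(\G^\mu)) \subseteq C(\G^\lambda),
\]
which completes the induction.

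The one step that genuinely requires care is identifying the index $r$ arising from the conjugation procedure with the index appearing in the $r$-validity hypothesis of Lemma~\ref{lem:induct_gap_build} and in the operator $\phi_r$ on partitions; this is a direct bookkeeping check against the explicit definitions of $\sigma_{A'}$ and $\tau_{A'}$ in Section~\ref{sec:jordan_forms_and_conj}. Everything else---the inductive hypothesis, the equality from Lemma~\ref{lem:induct_gap_build}, and the containment from Proposition~\ref{prop:worst}---then slots into place with no further computation.
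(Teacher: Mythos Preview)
Your proof is correct and mirrors the paper's almost exactly: induction on $n$, the identification $C_\U(A) = \overline{C_\U(B)} \cap C_\M(A)$, conjugation by $X_B$ together with the inductive hypothesis to land in $\overline{C(\G^\mu)}\cap C_\M(A')$, then Lemma~\ref{lem:induct_gap_build} followed by Proposition~\ref{prop:worst}. The only quibble is that the index $r$ with $\phi_r(\mu)=\lambda$ need not be unique (e.g.\ $\phi_1((2,2))=\phi_2((2,2))=(3,2)$), so $r$ should be \emph{defined} as the index produced by the conjugation procedure for $A'$---as you yourself say in the same sentence---after which $\phi_r(\mu)=\lambda$ is a consequence rather than a definition.
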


\begin{proof}
    We proceed by induction on $n$.
    In the base case, when $n=1$, we see that~$A$ must be the $1\times1$ zero-matrix, and $X_A$ is the $1\times1$ identity matrix.
    Thus, $X_AC_{\U}(A)X_A^{-1}$ consists of only the zero matrix, which is equal to $C(\G^{(1)})$.

    For $n>1$, let $A\in\U_n$ be $\GL$-conjugate to $J_\lambda$.
    Let $B:=A|_{n-1}$ be $\GL$-conjugate to $J_\mu$.
    The key to our inductive step is the equality 
    \begin{equation}\label{eq:inductive_key}
        C_{\U}(A)=\overline{C_{\U}(B)}\cap C_{\M}(A).
    \end{equation}
    To see this equality, first note that for two upper-triangular matrices $X$ and $Y$ to commute, it must also be that $X|_k$ and $Y|_k$ commute for every $k$.
    Hence, we have~$C_{\U}(A)\subseteq \overline{C_{\U}(B)}$.
    It is immediate that $C_{\U}(A)\subseteq C_{\M}(A)$, proving that the left-hand side is contained in the right-hand side.
    In the other direction, observe that $\overline{C_{\U}(B)}$ consists only of upper-triangular matrices, and that $C_{\M}(A)$ consists only of matrices that commute with $A$.

    We implicitly embed $X_B$ into $\M_{n\times n}$ by identifying $X_B$ with $X_B\oplus1$.
    With this identification, we have 
    \[X_B\overline{V}X_B^{-1}=\overline{X_B V X_B^{-1}}\text{, for any subspace }V\subseteq\M_{(n-1)\times(n-1)}.\]
    Now consider conjugating both sides of \eqref{eq:inductive_key} by $X_B$.
    We obtain
    \[X_B C_{\U}(A)X_B^{-1}\ \subseteq\ \overline{X_B C_\U(B) X_B^{-1}}\ \cap\ (X_B C_\M(A) X_B^{-1}).\]
    By inductive hypothesis, we may assume that $X_B C_\U(B) X_B^{-1}\subseteq C(\G^\mu)$. Thus,
    \[X_B C_{\U}(A) X_B^{-1} \subseteq \overline{C(\G^\mu)}\cap C_{\M}(A'),\]
    where $A'=X_BAX_B^{-1}$.
    Now $A'$ satisfies the hypotheses of Lemma \ref{lem:induct_gap_build}, so we may further conjugate by $Y_{A'}$ to obtain
    \[Y_{A'}X_BC_{\U}(A)X_B^{-1}Y_{A'}^{-1} \subseteq Y_{A'}\left( \overline{C(\G^{\mu})} \cap C_{\M}(A')\right)Y_{A'}^{-1} = C(\psi_r(\G^\mu)).\]
    From Proposition \ref{prop:worst}, we know that $\G^\lambda \le \psi_r(\G^{\mu})$, from which it follows immediately that~$C(\psi_r(\G^{\mu}))\subseteq C(\G^\lambda)$ as desired.
\end{proof}

\subsection{Bounds on $\kk(U_n)$}
We will bound $\kk(U_n)$ by using a map $h:\ell^2(\N)\to\R$.
Specifically, define $h(v):=\|v\|^2-\|v-Lv\|^2$.
The following theorem highlights how we use this function.

\begin{thm}\label{thm:h_bound}
    For every $\lambda\vdash n$, we have \[\abs{\comm(\lambda)}\le \sqrt{n!} q^{\frac{1}{2}(n^2+h(\lambda'))}.\]
\end{thm}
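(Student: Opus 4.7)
The plan is to count pairs in $\comm(\lambda)$ in the obvious two-step fashion: first choose $A\in\U_n$ with $\shape(A)=\lambda$, then choose $B\in C_\U(A)$.

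For the count of $A$'s I would simply appeal to Yip's bound \eqref{eq:yip} together with $f^\lambda\le\sqrt{n!}$, giving at most $\sqrt{n!}\,q^{\binom{n}{2}-\n(\lambda)}$ matrices $A$. For the centralizer bound I would use the heavy machinery already in place: by Theorem~\ref{thm:worst_gap}, $X_AC_\U(A)X_A^{-1}\subseteq C(\G^\lambda)$ for every $A$ of shape $\lambda$. Since conjugation by $X_A$ is a bijection on $\M_{n\times n}$, this yields the uniform bound $|C_\U(A)|\le |C(\G^\lambda)| = q^{\dim C(\G^\lambda)}$. Proposition~\ref{prop:gap_dim} together with Proposition~\ref{prop:g_bound} then computes
\[
\dim C(\G^\lambda) \;=\; n\ell - \abs{\G^\lambda} \;=\; n + 2\n(\lambda) - \tfrac{1}{2}\textstyle\sum_i m_i^2 - \tfrac{\ell}{2}.
\]

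Multiplying the two bounds, the exponent of $q$ becomes
\[
\binom{n}{2} - \n(\lambda) + n + 2\n(\lambda) - \tfrac{1}{2}\textstyle\sum_i m_i^2 - \tfrac{\ell}{2}
\;=\; \tfrac{n^2+n}{2} + \n(\lambda) - \tfrac{1}{2}\textstyle\sum_i m_i^2 - \tfrac{\ell}{2}.
\]
The last step is the only one requiring any algebra: I would rewrite this as $\tfrac{1}{2}(n^2 + h(\lambda'))$ using the two identities already recorded in the paper. Namely, \eqref{eq:n_as_ell_two} gives $\|\lambda'\|^2 = 2\n(\lambda) + n$, while \eqref{eq:m_as_ell_two} identifies $\lambda' - L\lambda'$ with the multiplicity vector $(m_1,m_2,\ldots)$, so that $\|\lambda' - L\lambda'\|^2 = \sum_i m_i^2$. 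Therefore
\[
h(\lambda') \;=\; \|\lambda'\|^2 - \|\lambda'-L\lambda'\|^2 \;=\; 2\n(\lambda) + n - \textstyle\sum_i m_i^2,
\]
and substituting yields $\tfrac{n^2+n}{2} + \n(\lambda) - \tfrac{1}{2}\sum_i m_i^2 = \tfrac{1}{2}(n^2 + h(\lambda')) + \tfrac{n}{2} - \tfrac{n}{2}$, with the extra $-\tfrac{\ell}{2}$ from the dimension formula making the bound slightly sharper than claimed. Discarding this extra saving yields the stated inequality.

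There is no real obstacle here, since all the work has been absorbed into Theorem~\ref{thm:worst_gap} and the two propositions on $\abs{\G^\lambda}$ and $\dim C(\G^\lambda)$. The one thing to be careful about is that the bound on $|C_\U(A)|$ must hold \emph{uniformly} over all $A$ of shape $\lambda$, so that $|\comm(\lambda)|$ can be bounded by the simple product of the number of such $A$ and the universal centralizer bound. This uniformity is precisely what Theorem~\ref{thm:worst_gap} provides, which is why invoking it — rather than, say, analyzing each $X_AC_\U(A)X_A^{-1}$ individually — is the efficient move.
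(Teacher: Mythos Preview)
Your proposal is correct and follows essentially the same route as the paper: bound the number of $A$'s by Yip's inequality~\eqref{eq:yip} with $f^\lambda\le\sqrt{n!}$, bound each $|C_\U(A)|$ uniformly by $q^{n\ell-|\G^\lambda|}$ via Theorem~\ref{thm:worst_gap} and Proposition~\ref{prop:gap_dim}, and then use Proposition~\ref{prop:g_bound} together with \eqref{eq:n_as_ell_two} and \eqref{eq:m_as_ell_two} to rewrite the exponent as $\tfrac12(n^2+h(\lambda'))-\tfrac{\ell}{2}$, discarding the $-\tfrac{\ell}{2}$. The only cosmetic difference is that the paper keeps the exponent as $\binom{n}{2}-\n(\lambda)+n\ell-|\G^\lambda|$ and verifies the inequality $n\ell-\tfrac{n}{2}-\n(\lambda)-|\G^\lambda|\le\tfrac12 h(\lambda')$ directly, rather than first expanding $\dim C(\G^\lambda)$ as you do.
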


\begin{proof}
    Observe that \[\abs{\comm(\lambda)}=\sum_{A:\shape(A)=\lambda}\abs{C_\U(A)}.\]
    From Theorem \ref{thm:worst_gap} and Proposition \ref{prop:gap_dim}, we know that for every $A$,
    \begin{align*}
        \abs{C_\U(A)} &\le \abs{C(\G^\lambda)}=q^{n\ell-\abs{\G^\lambda}}.
    \end{align*}
    Combining this fact with \eqref{eq:yip}, we see that
    \[\abs{\comm(\lambda)} \le F_\lambda\ q^{n\ell-\abs{\G^\lambda}}\le f^\lambda q^{\binom{n}{2}-\n(\lambda)+n\ell-\abs{\G^\lambda}},\]
    It is a basic result from the representation theory of the symmetric group that $f^\lambda\le \sqrt{n!}$.
    It therefore suffices to show that $\binom{n}{2}-\n(\lambda)+n\ell-\abs{\G^\lambda}\le\tfrac{1}{2}(n^2+h(\lambda'))$, or equivalently, that
    \begin{equation}\label{eq:wanted}
        n\ell-\tfrac{n}{2}-\n(\lambda)-\abs{\G^\lambda}\le \tfrac{1}{2}h(\lambda').
    \end{equation}
    Combining \eqref{eq:n_as_ell_two}, \eqref{eq:m_as_ell_two}, and Proposition \ref{prop:g_bound} with the left-hand side of \eqref{eq:wanted}, we obtain
    \begin{align*}
        n\ell-\frac{n}{2}-\n(\lambda)-\abs{\G^\lambda} &= \frac{n}{2}-\frac{\ell}{2}+\n(\lambda)-\sum_i\frac{m_i^2}{2}\\
        &= -\frac{\ell}{2}+\frac{1}{2}\left( \|\lambda'\|^2-\|\lambda'-L\lambda'\|^2 \right)\\
        &\le \frac{1}{2}h(\lambda'),
    \end{align*}
    which completes the proof.
\end{proof}

\begin{lem}\label{lem:asymptotic_technicalities}
    Let $D\subseteq\ell^1(\N)$ denote the vector space of all $v\in\ell^1(\N)$ satisfying the condition\footnote{The condition that all $v_i$ be non-negative is unnecessary. Because the terms are decreasing, if any term is strictly less than zero, the entire sequence would not be in $\ell^1(\N)$. We include it as part of the condition for the sake of clarity.} that $v_1\ge v_2\ge\dots\ge 0$. Then the following hold:
    \begin{enumerate}
        \item For every $c\in\R$, $h(cv)=c^2h(v)$.
        \item $\der{h}{v_1}\ge\der{h}{v_k}$ for all positive integers $k\ge3$.
        \item For all $v\in D$, we have $h(v) \le 2\|v\|_1v_2-3v_2^2$.
        \item For all $v\in D$ satisfying $v_1\ge\frac{1}{2}\|v\|_1$, and for all $k\ge 4$, we have $\der{h}{v_2}\ge\der{h}{v_k}$.
        \item For $v\in D$ with $v_1\ge \tfrac{1}{2}\|v\|_1$, we have $h(v)\le\|v\|_1v_1-\tfrac{3}{4}v_1^2$.
    \end{enumerate}
\end{lem}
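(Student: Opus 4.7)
The plan is to expand $h$ via the formula
\[
    h(v) = -\sum_{i \ge 2} v_i^2 + 2\sum_{i \ge 1} v_i v_{i+1},
\]
which follows from $\|v\|^2 = \sum v_i^2$ and $\|v-Lv\|^2 = \sum (v_i - v_{i+1})^2$, and then to deduce each item by direct algebraic manipulation. Item (1) is immediate, since $h$ is a homogeneous quadratic form in $v$.

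For items (2) and (4) I would compute the partials
\[
    \der{h}{v_1} = 2v_2, \qquad \der{h}{v_j} = 2(v_{j-1} - v_j + v_{j+1}) \quad (j \ge 2).
\]
Then (2) reads $\der{h}{v_1} - \der{h}{v_k} = 2(v_2 - v_{k-1}) + 2(v_k - v_{k+1})$, which is non-negative for $k \ge 3$ since $v_2 \ge v_{k-1}$ and $v_k \ge v_{k+1}$. For (4),
\[
    \der{h}{v_2} - \der{h}{v_k} = 2\bigl(v_1 + v_3 + v_k - v_2 - v_{k-1} - v_{k+1}\bigr);
\]
when $k \ge 4$ the indices $2,\,k-1,\,k+1$ are pairwise distinct and all at least $2$, so the hypothesis $v_1 \ge \tfrac{1}{2}\|v\|_1$, rewritten as $v_1 \ge \sum_{i \ge 2} v_i$, yields $v_1 \ge v_2 + v_{k-1} + v_{k+1}$, proving (4).

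For (3), a direct rearrangement gives
\[
    2\|v\|_1 v_2 - 3v_2^2 - h(v) \;=\; \sum_{i \ge 4} v_i(2v_2 - v_i) + \sum_{i \ge 3}(v_i - v_{i+1})^2,
\]
and each summand on the right is non-negative: the second sum trivially, and $v_i(2v_2 - v_i) \ge 0$ because $v_i \le v_2$ for $i \ge 4$ forces $2v_2 - v_i \ge v_2 \ge 0$.

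Item (5) I expect to be the main obstacle, as it couples the monotonicity of $v$ with the mass hypothesis. The plan is first to establish the identity
\[
    \|v\|_1 v_1 - \tfrac{3}{4} v_1^2 - h(v) \;=\; \tfrac{v_1^2}{4} - v_1 v_2 + \sum_{i \ge 3} v_i(v_1 - v_i) + \sum_{i \ge 2}(v_i - v_{i+1})^2,
\]
and then to complete the square twice. First, the grouping $\tfrac{v_1^2}{4} - v_1 v_2 + (v_2 - v_3)^2 + v_3(v_1 - v_3)$ collapses to $\bigl(\tfrac{v_1}{2} - v_2 + v_3\bigr)^2 - v_3^2$; second, absorbing the residual $-v_3^2$ into $(v_3 - v_4)^2 + v_4(v_1 - v_4)$ collapses to $v_4(v_1 - 2v_3)$. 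The net identity becomes
\[
    \|v\|_1 v_1 - \tfrac{3}{4}v_1^2 - h(v) = \bigl(\tfrac{v_1}{2} - v_2 + v_3\bigr)^2 + v_4(v_1 - 2v_3) + \sum_{i \ge 5} v_i(v_1 - v_i) + \sum_{i \ge 4}(v_i - v_{i+1})^2.
\]
Every term on the right is non-negative: the three obvious ones, and $v_4(v_1 - 2v_3) \ge 0$ because the hypothesis $v_1 \ge \tfrac{1}{2}\|v\|_1$ together with $v_2 \ge v_3$ yields $v_1 \ge v_2 + v_3 \ge 2v_3$. This establishes (5).
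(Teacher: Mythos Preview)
Your argument is correct. Parts (1), (2), and (4) match the paper's proof essentially verbatim (your treatment of (4) is in fact slightly cleaner: you observe directly that the three subtracted indices $2,\,k-1,\,k+1$ are distinct and all $\ge 2$, so $v_2+v_{k-1}+v_{k+1}\le\sum_{i\ge 2}v_i\le v_1$).

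Where your approach genuinely diverges is in (3) and (5). The paper does not produce explicit sum-of-squares identities; instead it uses (2) and (4) as monotonicity statements. For (3), it argues that since $\der{h}{v_1}\ge\der{h}{v_k}$ on $D$, one may shift mass from $v_k$ into $v_1$ without decreasing $h$; iterating on finitely supported $v$ reduces to $w=(\|v\|_1-v_2,\,v_2,\,0,\ldots)$, and then a density/continuity argument extends to all of $D$. Part (5) is handled analogously, shifting mass into $v_2$ via (4) and then optimizing over the remaining three-term sequence. Your route bypasses both the mass-moving and the density step by exhibiting the differences $2\|v\|_1v_2-3v_2^2-h(v)$ and $\|v\|_1v_1-\tfrac{3}{4}v_1^2-h(v)$ directly as sums of manifestly non-negative terms. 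This is more elementary and self-contained, and it makes the equality cases transparent (e.g.\ in (5) you see immediately that equality forces $v_4=v_5=\cdots=0$ and $v_2=\tfrac{v_1}{2}+v_3$). The trade-off is that your identities must be guessed and verified, whereas the paper's approach is systematic: it explains \emph{why} the extremal configurations are what they are.
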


\begin{proof}
    Part 1 follows from the fact that $h$ is a homogenous of degree two on $\ell^2(\N)$.
    For part 2, we compute $\der{h}{v_i}$.
    For $i=1$, we see that $\der{h}{v_1}=2v_2$.
    For $i>1$,
    \[\der{h}{v_i}=2v_{i-1}-2v_i+2v_{i+1}.\]
    Thus, for $k\ge 3$, we have \[\der{h}{v_1}-\der{h}{v_k}=(2v_2-2v_{k-1})+(2v_k-2v_{k+1}).\]
    This is positive, because $v$ is a weakly decreasing sequence.

    For part 3, define $w=(\|v\|_1-v_2,v_2,0,\dots)\in D$.
    Let \[D_k=\{v\in D\mid v_{k+1}=0\}.\]
    Furthermore, define $e_i\in\ell^1(\N)$ to be 1 in the $i$th position, and zero elsewhere.
    First suppose that $v\in D_k$ for some $k\ge 3$.
    From part 2, we know that \[h(v+v_k(e_1-e_k))\ge h(v).\]
    Moreover, $v+v_k(e_1-e_k)\in D_{k-1}$.
    Iterating this process, we see that if $v\in D_k$ for some $k\ge 3$, then we have \[h(v)\le h(\|v\|_1-v_2,v_2,0,\dots)=2\|v\|_1v_2-3v_2^2.\]
    Because $\bigcup_{k=2}^\infty D_k$ is dense in $D$, and both the left- and right-hand sides are continuous, we see that $v\le 2\|v\|_1v_2-3v_2^2$ for every $v\in D$.

    For part 4, note that \[\der{h}{v_2}-\der{h}{v_k}=2[v_1-(v_2-v_3)-(v_{k-1}-v_k)-v_{k+1}].\]
    If $k\ge4$, the quantity subtracted from $v_1$ is less than $\|Lv\|_1=\|v\|_1-v_1$.
    Because~$v_1>\tfrac{1}{2}\|v\|_1$, the quantity $\der{h}{v_2}-\der{h}{v_k}$ must be non-negative.

    Lastly, for part 5, we again consider $D_k=\{v\in D\mid v_{k+1}=0\}$.
    For $v\in D_k$, note that part 4 implies $h(v+v_k(e_2-e_k))\ge h(v)$.
    Because $v_1\ge \tfrac{1}{2}\|v\|_1$, we see $v+v_k(e_2-e_k)\in D_{k-1}$.
    Iterating this process, we eventually reach \[w:=(v_1,\|v\|_1-v_1-v_3,v_3,0,\dots).\]
    Thus, if $v\in D_k$ for some $k\ge 4$, we have
    \[h(v) \le h(w) = -\|v\|_1^2 + 4 \|v\|_1 v_1 - 3 v_1^2 + 4 \|v\|_1 v_3 - 6 v_1 v_3 - 4 v_3^2.\]
    As both the left- and right-hand sides are continuous, and $\bigcup_{k=3}^\infty D_k$ is dense in $D$, this bound holds for all $v\in D$.
    From here it suffices to show that $\|v\|_1 v_1-\tfrac{3}{4}v_1^2-h(w)\ge 0$ for all $v$.
    To this end, note that \[\|v\|_1 v_1-\tfrac{3}{4}v_1^2-h(w)=\tfrac{1}{4}\left( 2\|v\|_1 - 3v_1 - 4v_3 \right)^2\ge 0.\]
    Hence, $h(v)\le h(w)\le \|v\|_1 v_1-\tfrac{3}{4}v_1^2$ as desired.
\end{proof}

\begin{cor}\label{cor:sixth}
    The number of conjugacy classes in the group of upper-triangular matrices is bounded by \[\kk(U_n)\le p(n)\sqrt{n!}\ q^{\frac{n^2}{6}+\frac{n}{2}},\]
    where $p(n)$ is the number of integer partitions of $n$.
\end{cor}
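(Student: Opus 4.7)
The plan is to apply Burnside's lemma together with the disjoint decomposition $\comm(U_n) = \bigsqcup_{\lambda\vdash n}\comm(\lambda)$, yielding
\[
\kk(U_n) = \frac{\abs{\comm(U_n)}}{\abs{U_n}} = q^{-\binom{n}{2}}\sum_{\lambda\vdash n}\abs{\comm(\lambda)}.
\]
Theorem \ref{thm:h_bound} gives a uniform estimate on each term, so everything reduces to bounding the quantity $h(\lambda')$ over all partitions $\lambda\vdash n$.

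The key point is that $\lambda'\in D$ and $\|\lambda'\|_1=n$, so part 3 of Lemma \ref{lem:asymptotic_technicalities} yields
\[
h(\lambda') \le 2n\,\lambda'_2 - 3(\lambda'_2)^2.
\]
I would then maximize the right-hand side as a function of the real variable $x=\lambda'_2\ge 0$. The quadratic $2nx - 3x^2$ is maximized at $x=n/3$, giving the uniform bound $h(\lambda')\le n^2/3$.

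Plugging into Theorem \ref{thm:h_bound}, each $\abs{\comm(\lambda)}$ is at most $\sqrt{n!}\,q^{\frac{1}{2}(n^2 + n^2/3)} = \sqrt{n!}\,q^{2n^2/3}$. Summing over the $p(n)$ partitions of $n$ and dividing by $\abs{U_n} = q^{\binom{n}{2}}$,
\[
\kk(U_n) \le p(n)\sqrt{n!}\,q^{\frac{2n^2}{3} - \binom{n}{2}} = p(n)\sqrt{n!}\,q^{\frac{n^2}{6} + \frac{n}{2}},
\]
since $\tfrac{2n^2}{3}-\tfrac{n^2-n}{2} = \tfrac{n^2}{6} + \tfrac{n}{2}$.

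There is no real obstacle here: all the heavy lifting has been done in Theorem \ref{thm:h_bound} (which encapsulates the gap-array technology) and in Lemma \ref{lem:asymptotic_technicalities} (which provides the analytic bound on $h$). The only thing to verify is the one-variable optimization of $2nx - 3x^2$, which is routine. The mild subtlety worth noting is that $\lambda'_2$ need not equal $n/3$ for any actual partition, but this is irrelevant — we only need the inequality, not that it is attained.
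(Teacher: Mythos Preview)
Your proposal is correct and follows essentially the same argument as the paper: stratify $\comm(\U_n)$ by Jordan type, apply Theorem~\ref{thm:h_bound}, and then use part~3 of Lemma~\ref{lem:asymptotic_technicalities} to bound $h(\lambda')\le n^2/3$ via the one-variable optimization of $2nx-3x^2$. The only cosmetic difference is that the paper writes $\lambda'_2=c\|\lambda'\|_1$ and optimizes over $c$, whereas you optimize directly over $x=\lambda'_2$; the computations are identical.
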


\begin{proof}
    Because \[\abs{\comm(\U_n)}=q^{\binom{n}{2}}\kk(U_n),\] it suffices to show that \[\abs{\comm(\U_n)}\le p(n)\sqrt{n!}\ q^{\frac{2n^2}{3}}.\]
    To this end, we stratify $\comm(\U_n)$ by the Jordan type of the first matrix in each pair.
    From Theorem \ref{thm:h_bound}, we have \[\abs{\comm(\U_n)}=\sum_{\lambda\vdash n}\abs{\comm(\lambda)}\le\sum_{\lambda\vdash n} \sqrt{n!} q^{\frac{1}{2}(n^2+h(\lambda'))}.\]
    The sum is over $p(n)$ terms, so it suffices to show that each term is bounded $\sqrt{n!}q^{\frac{2}{3}n^2}$.
    In other words, it suffices to show that for every $\lambda\vdash n$, we have \[h(\lambda')\le\frac{n^2}{3}.\]

    Let $D=\{v\in\ell^1(\N)\mid v_1\ge v_2\ge\cdots\ge0\}$.
    Let $c\in\R$ such that $v_2=c\|v\|_1$.
    Then from part 3 of Lemma~\ref{lem:asymptotic_technicalities}, we have $h(v)\le (2c-3c^2)\|v\|_1^2$.
    This quantity is maximized at a value of $\tfrac{1}{3}\|v\|_1^2$, by taking $c=\tfrac{1}{3}$.
    For any $\lambda\vdash n$, we therefore have $h(\lambda')\le \tfrac{n^2}{3}$, as desired.
    Thus, our upper bound for the number of pairs of commuting upper-triangular matrices is \[\abs{\comm(\U_n)}\le p(n)\sqrt{n!}\ q^{\frac{2n^2}{3}},\] which implies the desired result.
\end{proof}

We are now ready to prove the following result, which trivially implies Theorem~\ref{thm:conj_bound}.
\begin{thm}\label{thm:better_conj_bound}
    For every positive integer $n$, and every prime power $q$, we have \[\kk(U_n)\le p(n)^2n!\ q^{cn^2+\frac{n}{2}},\text{ where }c=\frac{20\sqrt{2}}{49}-\frac{41}{98}\approx0.15886.\]
\end{thm}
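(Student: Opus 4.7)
The plan is to refine Corollary \ref{cor:sixth} by combining the gap-array bound of Theorem \ref{thm:h_bound} with a second, simpler bound on $|\comm(\lambda,\mu)|$, applied in whichever regime makes it sharper. Writing $|\comm(U_n)| = \sum_{\lambda,\mu \vdash n}|\comm(\lambda,\mu)|$, Theorem \ref{thm:h_bound} gives $|\comm(\lambda,\mu)| \le |\comm(\lambda)| \le \sqrt{n!}\,q^{(n^2+h(\lambda'))/2}$ and the symmetric bound with $\mu$ in place of $\lambda$. In parallel I would use a simpler, commutation-free estimate such as $|\comm(\lambda,\mu)| \le F_\lambda F_\mu$, which via Yip's inequality \eqref{eq:yip} together with the identity $\n(\lambda) = \tfrac{1}{2}\|\lambda'\|^2 - \tfrac{n}{2}$ from \eqref{eq:n_as_ell_two} yields $|\comm(\lambda,\mu)| \le n!\,q^{n^2 - \frac{1}{2}(\|\lambda'\|^2+\|\mu'\|^2)}$. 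The gap-based bounds are sharp when $h(\lambda')$ is small; the Yip-based bound is sharp when $\|\lambda'\|^2$ and $\|\mu'\|^2$ are large. These are opposite regimes, and combining the two averts the worst case of either.

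Taking the minimum of the three bounds for each $(\lambda,\mu)$ and summing over the at most $p(n)^2$ pairs reduces the problem to a constrained maximisation over normalised partitions $u = \lambda'/n$ and $v = \mu'/n$ in $D \subseteq \ell^1(\N)$. I would apply Lemma \ref{lem:asymptotic_technicalities}: parts (3) and (5) bound $h$ in terms of $u_2$ and $u_1$ respectively (with part (5) conditioned on $u_1 \ge 1/2$), and part (1) supplies scale-invariance. Introducing a threshold to split on whether the hypothesis of part (5) holds, and balancing the gap and Yip estimates at the crossover, produces a quadratic balance condition whose root is $c = \tfrac{20\sqrt{2}}{49} - \tfrac{41}{98}$, the $\sqrt{2}$ arising from the discriminant. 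Dividing the resulting bound on $|\comm(U_n)|$ by $|U_n| = q^{\binom{n}{2}}$ converts it into the claimed estimate $\kk(U_n) \le p(n)^2\,n!\,q^{cn^2 + n/2}$.

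The main obstacle is the final optimisation. The two types of bounds are quadratic in different coordinates of $\lambda'$, so identifying the extremal family of partitions and verifying that Lemma \ref{lem:asymptotic_technicalities}(5) applies at the extremum requires a careful case analysis. Moreover, the linear-in-$n$ term in the exponent and the polynomial prefactor $p(n)^2 n!$ must be tracked precisely so the final bound matches the stated form rather than being off by lower-order corrections.
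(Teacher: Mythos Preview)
Your proposal is correct and follows essentially the same approach as the paper: combine the gap-array bound of Theorem~\ref{thm:h_bound} (applied symmetrically to whichever of $\lambda,\mu$ gives the smaller $h$) with the commutation-free bound $|\comm(\lambda,\mu)|\le F_\lambda F_\mu$, then optimise using parts (3) and (5) of Lemma~\ref{lem:asymptotic_technicalities}. The paper implements the case split with two threshold parameters---$\delta$ controlling $|\lambda_2'/n-\tfrac{1}{3}|$ and $\eps$ controlling $\tfrac{2}{3}-\lambda_1'/n$---and the balancing condition gives $\eps=2\delta=\tfrac{4}{21}(5-3\sqrt{2})$, from which the constant $c$ drops out.
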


\begin{proof}
    Because $\abs{\comm(\U_n)}=\sum_{\lambda,\mu\vdash n}\abs{\comm(\lambda,\mu)}$,
    it suffices to show that \[\abs{\comm(\lambda,\mu)}\le \sqrt{n!}q^{c n^2+\frac{n}{2}}.\]

    Let $\delta$ and $\eps$ denote small positive quantities, each less than $\tfrac{1}{6}$, to be optimized later.
    We bound $\comm(\lambda,\mu)$ in one of three ways, depending on the shapes of $\lambda$ and $\mu$.
    Let $v\in\ell^1(\N)$ be defined by $v:=\lambda'/n$, so that $\|v\|_1=1$.

    First consider the case where $\abs{v_2-\frac{1}{3}}\ge\delta$.
    From Lemma \ref{lem:asymptotic_technicalities}, we use the bound~$h(v)\le 2v_2-3v_2^2$. 
    As a function of $v_2$, the right-hand side attains its maximum at~$v_2=\tfrac{1}{3}$.
    Thus, for $v_2$ satisfying $\abs{v_2-\frac{1}{3}}\ge\delta$, the right-hand side is maximized at $v_2=\frac{1}{3}\pm\delta$ with a value of $\frac{1}{3}-3\delta^2$.
    Thus, \[\abs{\comm(\lambda,\mu)}\le \sqrt{n!}\;q^{\frac{n^2}{2}(1+h(v))}\le \sqrt{n!}\;q^{\frac{n^2}{2}(\frac{4}{3}-3\delta^2)}.\]
    Because $\abs{\comm(\lambda,\mu)}=\abs{\comm(\mu,\lambda)}$, we also have this bound if $w=\mu'/n$ satisfies~$\abs{w_2-\frac{1}{3}}\ge\delta$.
    Henceforth we may assume that both $v$ and $w$ have their second entry in the open interval $(\tfrac{1}{3}-\delta,\tfrac{1}{3}+\delta)$.

    Second, if $v_1>\frac{2}{3}-\eps$, then from Lemma \ref{lem:asymptotic_technicalities}, we use the bound $h(v)\le v_1-\tfrac{3}{4}v_1^2$.
    Because $\eps\le\tfrac{1}{6}$, we know that $v_1>\tfrac{1}{2}$, and so $v$ satisfies the hypotheses of part 5 of Lemma \ref{lem:asymptotic_technicalities}.
    The polynomial $x-\tfrac{3}{4}x^2$ is maximized at $\tfrac{2}{3}$, and so for $v_1<\frac{2}{3}-\eps$, we see that $h(v)\le \tfrac{1}{3} - \tfrac{3}{4}\eps^2$.
    Thus, \[\abs{\comm(\lambda,\mu)} \le f^\lambda q^{\frac{n^2}{2}(1+h(v))}\le \sqrt{n!}\ q^{\frac{n^2}{2}(\frac{4}{3}-\frac{3}{4}\eps^2)}.\]
    By symmetry, this bound also applies if $\mu'_1\le(\frac{2}{3}-\eps)n$, so we may assume that both~$\lambda'_1/n$ and $\mu'_1/n$ are at least $\tfrac{2}{3}-\eps$.

    Lastly, consider the case where \[\frac{\lambda'_1}{n}\ge \frac{2}{3}-\eps\text{, and }\abs{\frac{\lambda'_2}{n}-\frac{1}{3}}\le \delta,\] and similarly for~$\mu$.
    We obtain an upper bound in the last case by disregarding commutativity, and using the bound \[\abs{\comm(\lambda,\mu)}\le F_\lambda F_\mu=f^\lambda f^\mu q^{n^2-n-\n(\lambda)-\n(\mu)}.\]
    To maximize the exponent, we take $\lambda'=\mu'=\left( (\tfrac{2}{3}-\eps)n,(\tfrac{1}{3}-\delta)n,1,1,\dots,1\right)$.
    The exponent $n^2-n-\n(\lambda)-\n(\mu)$ is therefore bounded above by
    \[n^2\left[1-(\tfrac{2}{3}-\eps)^2-(\tfrac{1}{3}-\delta)^2\right].\]

    Thus, our three bounds on the exponent are given by
    \[n^2\left[\tfrac{2}{3}-\tfrac{3}{2}\delta^2\right],\ n^2\left[\tfrac{2}{3}-\tfrac{3}{8}\eps^2\right]\text{ and},\ n^2\left[1-(\tfrac{2}{3}-\eps)^2-(\tfrac{1}{3}-\delta)^2\right].\]

    Taking $\eps=2\delta=\frac{4}{21}(5-3\sqrt{2})\approx 0.1443$, it is easy to verify that all three of these quantities are equal to $\alpha n^2$, where
    \[\alpha:=\frac{4}{49}+\frac{20\sqrt{2}}{49}\approx 0.65886.\]

    Thus, we obtain the bound $\comm(\U_n)\le \sum_{\lambda,\mu} n! q^{\alpha n^2}$, proving that
    \[\kk(U_n)\le p(n)^2\ n!\ q^{(\alpha-\frac{1}{2})n^2+\frac{n}{2}}=p(n)^2\ n!\ q^{cn^2+\frac{n}{2}}.\]
\end{proof}

\section{Lower central series}\label{sec:lower_central}
The aim of this section is to provide bounds on the number of conjugacy classes in groups in the lower central series of $U_n(q)$.
The techniques used here do not overlap with the techniques of gap arrays introduced in the previous sections.

For a nonnegative integer $k\le n-1$, let $U_{n,k}(q)$ denote the $k$th term in the lower central series of $U_n(q)$.
Explicitly, \[U_{n,k}(q)=\{A\in U_n(q)\mid A_{i,j}=0\text{ whenever }0< i-j\le k\}.\]
In this section, we prove an upper bound on then number of conjugacy classes in $U_{n,k}(q)$.
We first introduce some notation which we will use in the proof.

By $\U_{n,k}(q)$, we mean the $\F_q$-subalgebra of $\U_n(q)$ defined by \[\U_{n,k}(q):=\{X\mid 1+X\in U_{n,k}(q)\}.\]
Note that $\U_n(q)=\U_{n,0}(q)$.
We omit the parameter $q$ when the field is clear from context.
For a finite group $G$, we define the \emph{commuting probability} $\cp(G)$ to be the probability that two elements chosen uniformly at random from $G$ commute.
Clearly \[\cp(G)=\frac{\abs{\comm(G)}}{\abs{G}^2}=\frac{\kk(G)}{\abs{G}}.\]

For $A\in\U_a$ and $B\in\U_b$, let $T_{A,B}:\M_{a\times b}\to \M_{a\times b}$ by $T_{A,B}(X):=AX-XB$.
Note the map $T_{\lambda,\mu}$ defined in Lemma \ref{lem:jordan_rank} is shorthand for $T_{J_\lambda,J_\mu}$.
The following lemma is the key tool used in the proof of Theorem~\ref{thm:lower_central_bound}.

\begin{lem}\label{lem:key_lower_central}
    Let $a$ and $b$ be positive integers, and let $k$ be an integer such that $0\le k<a+b$.
    Let $V=\{X\in \M_{a\times b}\mid X_{i,j}=0\text{ whenever }j-i\ge a-k\}$.
    Then
    \[\cp(a+b,k)=\frac{1}{\abs{\U_{a,k}}^2\cdot\abs{\U_{b,k}}^2}\sum q^{-\dim(\im T_{A_1,B_1}+\im T_{A_2,B_2})}\]
    where the sum is over all $(A_1,A_2)\in\comm(\U_{a,k})$ and all $(B_1,B_2)\in\comm(\U_{b,k})$.
\end{lem}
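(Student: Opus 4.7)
The plan is to use the natural block decomposition. Every $P\in\U_{a+b,k}$ can be written uniquely as
\[P=\begin{pmatrix}A & Y\\ 0 & B\end{pmatrix}\]
with $A\in\U_{a,k}$, $B\in\U_{b,k}$, and $Y\in V$; the defining constraints of $\U_{a+b,k}$ transfer cleanly to constraints on each of the three blocks. This gives the factorization $\abs{\U_{a+b,k}}=\abs{\U_{a,k}}\cdot\abs{\U_{b,k}}\cdot q^{\dim V}$.

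Multiplying two such block matrices out, I would reduce the condition $PQ=QP$ to three independent conditions: $A_1A_2=A_2A_1$, $B_1B_2=B_2B_1$, and the linear equation $T_{A_1,B_1}(Y_2)=T_{A_2,B_2}(Y_1)$ on the off-diagonal blocks, where the subscripts $1,2$ correspond to $P,Q$.

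The step I expect to be the main obstacle is showing that each operator $T_{A_i,B_i}$ restricts to an endomorphism of $V$, so that both images are subspaces of a common ambient space. The cleanest approach is to note that $\U_{a+b,k}$ is closed under multiplication---this follows because $U_{a+b,k}$ is a group and $(1+X)(1+Y)=1+(X+Y+XY)$---so for any $P,Q\in\U_{a+b,k}$ the $(1,2)$-block $A_1Y_2+Y_1B_2$ of $PQ$ lies in $V$. Specializing to $Y_1=0$ and then to $Y_2=0$ gives $A_1Y_2\in V$ and $Y_1B_2\in V$ separately, hence $T_{A,B}(V)\subseteq V$ whenever $A\in\U_{a,k}$ and $B\in\U_{b,k}$.

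Once $V$-stability is in hand, the rest is a rank--nullity calculation. For each fixed choice of commuting pairs $(A_1,A_2)$ and $(B_1,B_2)$, the number of $(Y_1,Y_2)\in V\times V$ satisfying the linear condition equals $\abs{V}^2/q^{\dim(\im T_{A_1,B_1}+\im T_{A_2,B_2})}$, because the image of the combined linear map $(Y_1,Y_2)\mapsto T_{A_1,B_1}(Y_2)-T_{A_2,B_2}(Y_1)$ from $V\oplus V$ into $V$ is precisely the sum of the two individual images (the inclusion $\supseteq$ comes from plugging in $Y_1=0$ or $Y_2=0$; the inclusion $\subseteq$ is immediate). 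Summing this count over all admissible commuting pairs and dividing by $\abs{\U_{a+b,k}}^2=\abs{\U_{a,k}}^2\abs{\U_{b,k}}^2q^{2\dim V}$ yields the claimed formula for $\cp(a+b,k)$.
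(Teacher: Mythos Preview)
Your proposal is correct and follows essentially the same route as the paper: the block decomposition $\U_{a+b,k}\cong\U_{a,k}\oplus V\oplus\U_{b,k}$, the reduction of commutativity to the three conditions on $(A_i)$, $(B_i)$, and the linear equation $T_{A_1,B_1}(Y_2)=T_{A_2,B_2}(Y_1)$, and the rank--nullity count via the map $(Y_1,Y_2)\mapsto T_{A_1,B_1}(Y_2)-T_{A_2,B_2}(Y_1)$ with image $\im T_1+\im T_2$. The only difference is that you explicitly verify $T_{A,B}(V)\subseteq V$ (via closure of $\U_{a+b,k}$ under multiplication), whereas the paper simply declares the codomain of the combined map to be $V$; your added care here is welcome but does not change the argument.
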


\begin{proof}
    We begin with the vector space isomorphism $\U_{a+b,k}\to \U_{a,k}\oplus V\oplus\U_{b,k}$ given by
    \[e_{i,j}\mapsto\begin{cases}
            (e_{i,j}, 0, 0) & \text{if }i,j\le a\\
            (0, e_{i,j - a}, 0) & \text{if }i\le a, j>a\\
            (0, 0, e_{i - a, j - a}) & \text{if }i,j>a.
    \end{cases}\]
    Graphically, the isomorphism is shown in Figure \ref{fig:decomposition}.

    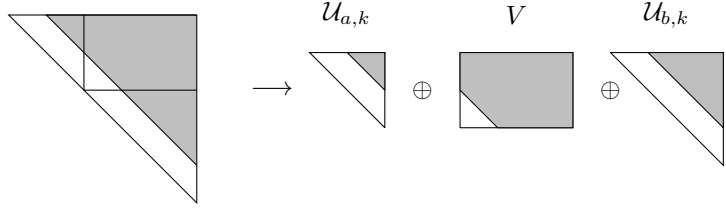
\begin{figure}[h]
        \begin{tikzpicture}[scale=0.5]
            \draw[fill=lightgray] (5,1) -- (1,5) -- (5,5);
            \draw (5,5) -- (5, 0) -- (0,5) -- (5,5);
            \draw (5,3) -- (2,3) -- (2,5);

            \node at (7, 3) {$\longrightarrow$};
            \draw (10, 2) -- (8, 4) -- (10, 4) -- (10, 2);
            \draw[fill=lightgray] (10, 3) -- (9, 4) -- (10, 4) -- (10, 3);
            \node at (11, 3) {$\oplus$};
            \draw (12, 2) -- (12, 4) -- (15, 4) -- (15, 2) -- (12, 2);
            \draw[fill=lightgray] (12, 3) -- (12, 4) -- (15, 4) -- (15, 2) -- (13, 2) -- (12, 3);
            \node at (16, 3) {$\oplus$};
            \draw (16, 4) -- (19, 4) -- (19, 1) -- (16, 4);
            \draw[fill=lightgray] (17, 4) -- (19, 4) -- (19, 2) -- (17, 4);

            \node at (9,5) {$\U_{a,k}$};
            \node at (13.5,5) {$V$};
            \node at (17.5,5) {$\U_{b,k}$};

        \end{tikzpicture}
        \caption{A graphical representation of the decomposition of $\U_{a+b,k}$ into $\U_{a,k}\oplus V\oplus\U_{b,k}$. The gray regions represent the cells which can be non-zero. The white regions represent the cells which must contain zeros.}
        \label{fig:decomposition}
    \end{figure}

    The multiplicative structure on $\U_{a+b,k}$ can be pushed through this isomorphism to make $\U_{a,k}\oplus V\oplus \U_{b,k}$ an $\F_q$-algebra, and is given by
    \[(A_1,X_1,B_1)\cdot (A_2,X_2,B_2)=(A_1B_1,\ A_1X_2+X_1B_2,\ A_2B_2).\]

    We count pairs of commuting elements in $\U_{a,k}\oplus V\oplus \U_{b,k}$.
    Note that $(A_1,X_1,B_1)$ and $(A_2,X_2,B_2)$ commute if and only all three of the following conditions hold:
    \begin{enumerate}
        \item $A_1$ and $A_2$ commute in $\U_{a,k}$,
        \item $B_1$ and $B_2$ commute in $\U_{b,k}$,
        \item $T_{A_1,B_1}(X_2)=T_{A_2,B_2}(X_1)$.
    \end{enumerate}
    For $i=1,2$, let $T_i=T_{A_i, B_i}$.
    We may count pairs of commuting elements in $\U_{a+b,k}$ by counting how many $(X_1,X_2)\in V\oplus V$ satisfy $T_1(X_2)=T_2(X_1)$, and summing over all $(A_1,A_2)\in\comm(\U_{a,k})$ and all $(B_1,B_2)\in\comm(\U_{b,k})$.
    Therefore,
    \[\cp(a+b,k) = \frac{1}{\abs{\U_{a+b,k}}^2}\sum\#\{(X_1,X_2)\in V\times V\mid T_1(X_2)=T_2(X_1)\},\]
    where the sum is over \[S:=\{(A_1,A_2,B_1,B_2)\mid (A_1,A_2)\in\comm(\U_{a,k}),(B_1,B_2)\in\comm(\U_{b,k})\}.\]
    Note that $\{(X_1,X_2)\mid T_1(X_2)=T_2(X_1)\}$ is the kernel of the map $\Phi:V\oplus V\to V$ defined by
    \[\Phi(X,Y):=T_1(X)-T_2(Y).\]
    Obviously, $\im\Phi=\im T_1+\im T_2$.
    By the rank-nullity theorem, we have 
    \begin{equation}\label{eq:nearly_done}
        \cp(a+b,k)=\left(\frac{q^{\dim V}}{\abs{\U_{a+b,k}}}\right)^2\sum_S q^{-\dim(\im T_1 + \im T_2)}.
    \end{equation}

    Recalling that $\dim\U_{a+b,k}=\dim\U_{a,k}+\dim V+\dim\U_{b,k}$, we see that 
    \begin{equation}\label{eq:dimensions}
        \frac{q^{\dim V}}{\abs{\U_{a+b,k}}}=\frac{1}{\abs{\U_{a,k}}\abs{\U_{b,k}}}.
    \end{equation}
    Putting together equations \eqref{eq:nearly_done} and \eqref{eq:dimensions}, we obtain the desired result.
\end{proof}

\begin{lem}\label{lem:no_comm}
    Let $N_r(a,b):=\{(A,B)\in\U_a\times\U_b\mid \rank T_{A,B}\le r\}$.
    Then \[\abs{N_{a,b}(r)}\le p(a)p(b)\sqrt{a!b!}\ q^{\frac{(a-b)^2}{2}+r}.\]
\end{lem}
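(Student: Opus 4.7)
The plan is to stratify $N_{a,b}(r)$ by the Jordan shapes of $A$ and $B$, bound the size of each stratum via Yip's inequality \eqref{eq:yip}, and then extract the claimed exponent from the rank constraint using Cauchy--Schwarz.

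First I would reduce $\rank T_{A,B}$ to an invariant of the Jordan shapes. Writing $A = PJ_\lambda P^{-1}$ with $\lambda = \shape(A)$ and $B = QJ_\mu Q^{-1}$ with $\mu = \shape(B)$, the linear isomorphism $X \mapsto P^{-1}XQ$ of $\M_{a\times b}$ intertwines $T_{A,B}$ with $T_{J_\lambda,J_\mu} = T_{\lambda,\mu}$, so these two maps have the same rank. Lemma~\ref{lem:jordan_rank} then gives $\rank T_{\lambda,\mu} = ab - \angs{\lambda',\mu'}$, and so the constraint $\rank T_{A,B} \le r$ is exactly $\angs{\lambda',\mu'} \ge ab - r$.

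Next, applying \eqref{eq:yip} together with the bounds $f^\lambda \le \sqrt{a!}$ and $f^\mu \le \sqrt{b!}$ yields
\[
|N_{a,b}(r)| \;\le\; \sum \sqrt{a!\,b!}\;q^{\binom{a}{2} - \n(\lambda) + \binom{b}{2} - \n(\mu)},
\]
where the sum ranges over the at most $p(a)p(b)$ pairs $(\lambda,\mu)$ with $\angs{\lambda',\mu'} \ge ab - r$. Using \eqref{eq:n_as_ell_two} to rewrite $\binom{a}{2} - \n(\lambda) = \tfrac12(a^2 - \|\lambda'\|^2)$ (and similarly for $\mu$), the exponent simplifies to $\tfrac12\bigl(a^2 + b^2 - \|\lambda'\|^2 - \|\mu'\|^2\bigr)$. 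Since $\tfrac{(a-b)^2}{2} = \tfrac12(a^2 + b^2 - 2ab)$, the bound reduces to showing
\[
\|\lambda'\|^2 + \|\mu'\|^2 \;\ge\; 2ab - 2r
\]
for every admissible pair $(\lambda,\mu)$.

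This last step is the real content, and it is immediate from AM--GM combined with Cauchy--Schwarz applied to $\lambda',\mu' \in \ell^2(\N)$:
\[
\|\lambda'\|^2 + \|\mu'\|^2 \;\ge\; 2\|\lambda'\|\cdot\|\mu'\| \;\ge\; 2\angs{\lambda',\mu'} \;\ge\; 2(ab - r).
\]
I don't anticipate any serious obstacle: the only non-routine observation is the similarity of $T_{A,B}$ to $T_{\lambda,\mu}$, after which the bookkeeping with $\n$, $\|{\cdot}\|^2$, and the two inequalities delivers the claim.
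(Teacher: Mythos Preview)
Your proposal is correct and follows essentially the same route as the paper: stratify by Jordan type, apply Yip's bound \eqref{eq:yip}, and use Cauchy--Schwarz together with AM--GM on $\angs{\lambda',\mu'}$ against the rank constraint $\angs{\lambda',\mu'}\ge ab-r$. The only cosmetic difference is that the paper tracks the exponent through $\n(\lambda)$ while you rewrite it directly in terms of $\|\lambda'\|^2$ via \eqref{eq:n_as_ell_two}; the two bookkeeping schemes are equivalent.
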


\begin{proof}
    Because $\dim\ker T_{A,B}=\dim\ker T_{C,D}$ whenever $A$ is $\GL_a$-conjugate to $C$ and $B$ is $\GL_b$-conjugate to $D$, we may safely assume that $A$ and $B$ are in Jordan canonical form.
    From Lemma \ref{lem:jordan_rank}, we know $\dim\ker T_{\lambda,\mu}=\angs{\lambda',\mu'}$.
    Applying the Cauchy-Schwarz inequality followed by the classical arithmetic-geometric mean inequality, we obtain
    \begin{equation}\label{eq:rank_bound}
        \dim\ker T_{\lambda,\mu}=\angs{\lambda',\mu'}\le \frac{\|\lambda'\|^2+\|\mu'\|^2}{2}=\frac{a+b}{2}+\n(\lambda)+\n(\mu).
    \end{equation}

    We stratify $N_{a,b}(r)$ by the Jordan forms of the pairs $(A,B)$ which appear in~$N_{a,b}(r)$.
    Let $S:=\{(\lambda,\mu)\mid \angs{\lambda',\mu'}\ge ab-r\}$.
    The pairs of partitions in $S$ are precisely the partitions indexing Jordan forms of pairs of matrices in $N_{a,b}(r)$.
    Thus, \[\abs{N_{a,b}(r)}=\sum_{(\lambda,\mu)\in S} F_\lambda(q)F_\mu(q).\]
    Recall from \eqref{eq:yip} that $F_\lambda(q)\le f^\lambda q^{\binom{a}{2}-\n(\lambda)}$, so
    \[\abs{N_{a,b}(r)} \le \sum_{(\lambda,\mu)\in S} f^\lambda f^\mu q^{\frac{a^2+b^2}{2} - \left( \frac{a+b}{2} + \n(\lambda) + \n(\mu) \right)}.\]
    Now applying \eqref{eq:rank_bound} we get
    \[\abs{N_{a,b}(r)}\le \sum_{(\lambda,\mu)\in S} f^\lambda f^\mu q^{\frac{a^2+b^2}{2} - (ab - r)} = q^{(a-b)^2/2+r}\ \sum_{(\lambda,\mu)\in S} f^\lambda f^\mu.\]

    Noting that $f^\lambda\le \sqrt{a!}$ and $\abs{S}\le p(a)p(b)$, the result follows.
\end{proof}

We now present the proof of Theorem \ref{thm:lower_central_bound}.
Define $\beta_0:=0$, and for each positive integer $m$, define $\beta_m$ inductively by \[\beta_m:=\tfrac{1}{4}\left(\beta_{m-1}-(1-2^{-m})^2\right).\]
A routine calculation shows that for all $m$, we have \[\beta_m=-\frac{1}{3}-\frac{2}{3}\cdot 4^{-m}+2^{-m}- 4^{-(m+1)}m.\]

\begin{thm}\label{thm:lcs_comm}
    Let $m$ be a non-negative integer such that $2^{-(1+m)}\le \tfrac{k}{n}\le 2^{-m}$. Then \[\cp(n,k)\le q^{\beta_m n^2(1+o_m(1))},\] where $o_m(1)$ is a function of $m$ and $n$ which, for each fixed $m$, tends to zero as $n$ tends to infinity.
\end{thm}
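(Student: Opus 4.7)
I would prove Theorem~\ref{thm:lcs_comm} by induction on $m$. The base case $m=0$ is immediate since $\beta_0=0$ gives the trivial bound $\cp(n,k)\le 1$. For the inductive step, given $k,n$ with $2^{-(m+1)}\le k/n\le 2^{-m}$, I split $n=a+b$ with $a=\lceil n/2\rceil$ and $b=\lfloor n/2\rfloor$, so that both $k/a$ and $k/b$ lie in $[2^{-m},2^{-(m-1)}]$ and the inductive hypothesis for $m-1$ bounds $\cp(a,k)$ and $\cp(b,k)$. Applying Lemma~\ref{lem:key_lower_central} expresses
\[\cp(n,k)=\frac{1}{|\U_{a,k}|^2|\U_{b,k}|^2}\sum q^{-\dim(\im T_{A_1,B_1}+\im T_{A_2,B_2})},\]
summed over $(A_1,A_2)\in\comm(\U_{a,k})$ and $(B_1,B_2)\in\comm(\U_{b,k})$.

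Next I would introduce a rank threshold $R$ (to be optimized) and split the sum according to whether both $\rank T_{A_i,B_i}>R$ or at least one of them is at most $R$. In the \emph{large rank} case, $\dim(\im T_1+\im T_2)\ge\max(\rank T_1,\rank T_2)>R$ supplies a factor $q^{-R}$, and bounding the count by $|\comm(\U_{a,k})|\cdot|\comm(\U_{b,k})|$ yields a contribution of at most $q^{-R}\cp(a,k)\cp(b,k)\le q^{-R+\beta_{m-1}n^2/2\,(1+o_m(1))}$ to $\cp(n,k)$ by the inductive hypothesis. In the \emph{small rank} case, by the symmetry between indices $1$ and $2$ it suffices to control twice the contribution from $\rank T_{A_1,B_1}\le R$; bounding $q^{-\dim(\ldots)}\le 1$ and summing over completions $(A_2,B_2)\in C_{\U_{a,k}}(A_1)\times C_{\U_{b,k}}(B_1)$ reduces this to estimating
\[\frac{2}{|\U_{a,k}|^2|\U_{b,k}|^2}\sum_{\substack{(A_1,B_1)\in\U_{a,k}\times\U_{b,k}\\ \rank T_{A_1,B_1}\le R}}|C_{\U_{a,k}}(A_1)|\cdot|C_{\U_{b,k}}(B_1)|.\]
Lemma~\ref{lem:no_comm} (applied to the larger sets $\U_a\times\U_b\supseteq\U_{a,k}\times\U_{b,k}$) bounds the number of such pairs by $p(a)p(b)\sqrt{a!b!}\,q^{(a-b)^2/2+R}$, which together with $|\U_{a,k}|\approx q^{(a-k)^2/2+O(a)}$ should deliver a small-rank contribution of approximately $q^{R-n^2(1/2-k/n)^2+o_m(n^2)}$.

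Balancing $R$ to equalize the two pieces and then taking the worst case over $k/n\in[2^{-(m+1)},2^{-m}]$ should recover the claimed exponent, with the recursion $\beta_m=\tfrac14(\beta_{m-1}-(1-2^{-m})^2)$ emerging from the optimization. The main obstacle is making the small-rank estimate tight enough at the endpoint $k/n=2^{-m}$: the naive centralizer bound $|C(A)|\le|\U|$ by itself produces an exponent of the form $\beta_{m-1}/4-(1/2-k/n)^2/2$, which matches the stated $\beta_m$ only after one additionally exploits the commutation relations $A_1A_2=A_2A_1$ and $B_1B_2=B_2B_1$ — heuristically, averaging over typical $(A_1,B_1)$ in $N_R$ should inject an extra $\cp(a,k)\cp(b,k)$ factor into the small-rank piece, allowing the recursion to close cleanly with the correct $\beta_m$. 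Careful bookkeeping of the $o_m(1)$ corrections, so that they accumulate controllably across the $m$ levels of induction, is the final technical point.
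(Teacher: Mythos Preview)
Your overall architecture matches the paper's proof: induction on $m$, the split $n=a+b$ with $a,b\approx n/2$, Lemma~\ref{lem:key_lower_central}, a rank threshold, and Lemma~\ref{lem:no_comm} for the low-rank piece. The gap is in the dichotomy you chose for the threshold split, and it is exactly the obstacle you flagged at the end.

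You split into ``both $\rank T_i>R$'' versus ``at least one $\rank T_i\le R$.'' In the second case only one pair $(A_1,B_1)$ is forced into $N_{a,b}(R)$, and you are left trying to extract a further $\cp(a,k)\cp(b,k)$ saving from the centralizer sum --- the heuristic you propose does not go through and is not needed. The paper instead splits on the quantity actually appearing in the exponent, namely $j=\dim(\im T_{A_1,B_1}+\im T_{A_2,B_2})$. When $j<r$, \emph{both} individual ranks satisfy $\rank T_{A_i,B_i}\le j<r$, so after forgetting commutativity the count is at most $|N_{a,b}(j)|^2$ rather than $|N_{a,b}(R)|\cdot|\U_{a,k}|\cdot|\U_{b,k}|$. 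That squaring is what converts your small-rank exponent $R-\tfrac14(1-2^{-m})^2n^2$ into the paper's $r-\tfrac12(1-2^{-m})^2n^2$, and after balancing against the unchanged large-rank term $-r+\tfrac12\beta_{m-1}n^2$ it yields precisely $\beta_m=\tfrac14(\beta_{m-1}-(1-2^{-m})^2)$.

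Equivalently, you could keep a split on individual ranks but reverse the dichotomy: ``at least one $\rank T_i>R$'' (large, since then $\dim(\im T_1+\im T_2)\ge\max_i\rank T_i>R$) versus ``both $\rank T_i\le R$'' (small, giving $|N_{a,b}(R)|^2$). Either way, once the split is corrected the optimization closes exactly as you anticipated, with no appeal to commutation in the small-rank piece and only routine bookkeeping of the $o_m(1)$ terms.
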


\begin{proof}
    We proceed with the proof that $\cp(n,k)\le q^{\beta_m n^2(1+o_m(1))}$ by induction on $m$.
    When $m=0$, we have $\tfrac{1}{2}\le \tfrac{k}{n}\le 1$, and so $U_{n,k}$ is abelian, proving that $\cp(n,k)=1=q^0$ as desired.

    Now let $V=\{X\in \M_{a\times b}\mid X_{i,j}=0\text{ whenever }j-i\ge a-k\}$, and let
    \begin{align*}
        C_{a,b}^k(r) &= \{(A_1,A_2,B_1,B_2)\in\comm(\U_{a,k})\times\comm(\U_{b,k})\mid\\
        & \hspace{5cm}\rank_V(T_{A_1,B_1},T_{A_2,B_2})=r\}.
    \end{align*}
    From Lemma \ref{lem:key_lower_central}, we have 
    \[\cp(n,k)=\frac{1}{\abs{\U_{a,k}}^2\abs{\U_{b,k}}^2}\sum_{j=0}^{ab-\binom{k+1}{2}}q^{-j}\ \abs{C_{a,b}^k(j)}.\]
    Pick some number $r$ to be optimized later, and split the sum based on the relationship between $r$ and $j$.
    First considering the case when $j\ge r$, we have
    \begin{equation}\label{eq:large_rank}
        \sum_{j\ge r}q^{-j}\abs{C_{a,b}^k(j)} \le q^{-r}\sum_{j\ge r}\abs{C_{a,b}^k(j)} \le q^{-r}\abs{\comm(\U_{a,k})}\abs{\comm(\U_{b,k})}.
    \end{equation}
    
    On the other hand, if $j < r$, we forget about the commutativity relation, and remember only that the rank of each map $T_{A_i,B_i}$ must be bounded by $r$.
    Thus, for $j<r$ we have $C_{a,b}^j(k)\le \abs{N_{a,b}(j)}^2$.
    Combining this with \eqref{eq:large_rank} and Lemma \ref{lem:no_comm}, we obtain
    \begin{align*}
        \cp(n,k) &\le q^{-r}\cp(a,k)\cp(b,k)+\frac{1}{\abs{\U_{a,k}}^2\abs{\U_{b,k}}^2}\sum_{j=0}^{r-1}q^{-r}\abs{N_{a,b}(j)}^2\\
        &\le q^{-r}\cp(a,k)\cp(b,k)+\frac{p(a)^2p(b)^2a!b!}{\abs{\U_{a,k}}^2\abs{\U_{b,k}}^2}\sum_{j=0}^{r-1}q^{(a-b)^2+2j-r}\\
        &\le q^{-r}\cp(a,k)\cp(b,k)+\frac{p(a)^2p(b)^2a!b!}{\abs{\U_{a,k}}^2\abs{\U_{b,k}}^2}q^{(a-b)^2+r-1}.
    \end{align*}

    To optimize these parameters, we take $a=\floor{n/2}$ and $b=\ceil{n/2}$, and 
    \begin{equation}\label{eq:optimize_r}
        r=\left[\frac{1}{4}\left(\beta_{m-1}+\left(1-2^{-m}\right)^2\right)n^2\right],
    \end{equation}
    where the square brackets denote the nearest integer function.
    In the calculations to follow, we omit the $[\cdot]$, as it complicates the computation, and does not contribute to the leading term in the exponent.
    The difference is absorbed into the $o_m(1)$ term.

    Because $2^{-m}\le \tfrac{2k}{n}\le 2^{1-m}$, by inductive hypothesis, we have \[\cp(a,k)\le q^{\beta_{m-1}a^2(1+o_m(1))}=q^{\frac{\beta_{m-1}n^2}{4}(1+o_m(1))}.\]
    Note that $p(a)^2p(b)^2a!b!\le q^{o(n^2)}$, and that 
    \[\abs{\U_{a,k}}=q^{\frac{(a-k)^2}{2}(1+o_m(1))}=q^{\frac{1}{8}\left(1-\frac{2k}{n}\right)^2n^2(1+o_m(1))}.\]
    Combining these facts, we obtain
    \[\cp(n,k) \le q^{-r+\frac{\beta_{m-1}n^2}{2}(1+o_m(1))}+q^{r-\frac{1}{2}(1-2^{-m})^2n^2(1+o_m(1))}\]
    Substitiuting in \eqref{eq:optimize_r}, both exponents on the right-hand side become 
    \[\tfrac{1}{4}\left(\beta_{m-1}-(1-2^{-m})^2\right)n^2(1+o_m(1)),\]
    which is equal to $\beta_mn^2(1+o_m(1))$.
    Thus, \[\cp(n,k)\le 2q^{\beta_mn^2(1+o_m(1))}=q^{\beta_mn^2(1+o_m(1))},\] which completes the proof.
\end{proof}

\begin{cor}[Theorem \ref{thm:lower_central_bound}]
    Let 
    \[\gamma_m:=\frac{1}{6}-\frac{13}{24}\cdot 4^{-m}+2^{-(m+1)}-4^{-(m+1)}m.\]
    Then for every $q$,
    \[\kk(U_{n,k}(q))\le q^{\gamma_m n^2(1+o_m(1))},\]
    where $m=\floor{\log_2\left(\tfrac{n}{k}\right)}$, and $o_m(1)$ denotes a function which, for each fixed $m$, tends to zero as $n$ tends to infinity.
\end{cor}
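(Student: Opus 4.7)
The plan is to convert the commuting-probability bound of Theorem~\ref{thm:lcs_comm} into a count of conjugacy classes via the identity $\kk(G)=\abs{G}\cp(G)$. Concretely, I would write
\[
    \kk(U_{n,k}(q)) \;=\; \abs{U_{n,k}(q)}\cdot \cp(n,k)
\]
and bound each factor separately. The first factor is pure combinatorics: $\U_{n,k}$ consists of strictly upper-triangular matrices whose entries in positions $(i,j)$ with $0<j-i\le k$ all vanish, so it has dimension equal to the number of pairs $(i,j)$ with $j-i>k$, namely $\binom{n-k}{2}$. Hence $\abs{U_{n,k}(q)}=q^{\binom{n-k}{2}}$.

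Next, the hypothesis $m=\floor{\log_2(n/k)}$ is precisely $2^{-(m+1)}<k/n\le 2^{-m}$, which is the regime covered by Theorem~\ref{thm:lcs_comm}. That theorem immediately gives $\cp(n,k)\le q^{\beta_m n^2(1+o_m(1))}$. Using $k/n>2^{-(m+1)}$, we have
\[
    \binom{n-k}{2}\;\le\;\tfrac{1}{2}(n-k)^2\;\le\;\tfrac{1}{2}\bigl(1-2^{-(m+1)}\bigr)^2 n^2(1+o(1)).
\]
Combining the two bounds, the exponent of $q$ in $\kk(U_{n,k}(q))$ is at most
\[
    \Bigl(\beta_m + \tfrac{1}{2}\bigl(1-2^{-(m+1)}\bigr)^2\Bigr)\, n^2(1+o_m(1)).
\]

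The only remaining task is to verify algebraically that this coefficient equals $\gamma_m$. Substituting the closed form
\[
    \beta_m=-\tfrac{1}{3}-\tfrac{2}{3}\cdot 4^{-m}+2^{-m}-4^{-(m+1)}m
\]
and expanding $\tfrac{1}{2}(1-2^{-(m+1)})^2=\tfrac{1}{2}-2^{-(m+1)}+\tfrac{1}{2}\cdot 4^{-(m+1)}$, I would collect the constant terms ($-\tfrac{1}{3}+\tfrac{1}{2}=\tfrac{1}{6}$), the $4^{-(m+1)}$ terms ($-\tfrac{8}{3}+\tfrac{1}{2}=-\tfrac{13}{6}$, giving $-\tfrac{13}{24}\cdot 4^{-m}$), and the $2^{-(m+1)}$ terms ($2^{-m}-2^{-(m+1)}=2^{-(m+1)}$), yielding $\gamma_m$ exactly. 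This step is the only genuine obstacle, and it is a routine algebraic check rather than a conceptual difficulty; since both Theorem~\ref{thm:lcs_comm} and the size of $\U_{n,k}$ are already in hand, the corollary follows immediately once the coefficient identity is confirmed.
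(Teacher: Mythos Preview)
Your proposal is correct and follows essentially the same route as the paper: both use $\kk(G)=\abs{G}\cp(G)$, compute $\abs{U_{n,k}}=q^{\binom{n-k}{2}}$, invoke Theorem~\ref{thm:lcs_comm} for $\cp(n,k)$, bound $\binom{n-k}{2}\le\tfrac{1}{2}(1-2^{-(m+1)})^2n^2(1+o_m(1))$ via $k/n\ge 2^{-(m+1)}$, and then verify the identity $\beta_m+\tfrac{1}{2}(1-2^{-(m+1)})^2=\gamma_m$. Your explicit coefficient-by-coefficient check of this identity is slightly more detailed than the paper's, but the argument is otherwise identical.
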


\begin{proof}
    Recalling that $\kk(G)=\cp(G)\abs{G}$, we see that Theroem~\ref{thm:lcs_comm} implies that \[\kk(U_{n,k})\le q^{\binom{n-k}{2}+\beta_mn^2(1+o_m(1))}.\]
    Combining this with the fact that $\binom{n-k}{2}=\tfrac{1}{2}\left(1-\tfrac{k}{n}\right)^2n^2(1+o_m(1))$, and also the fact that $\frac{k}{n}\ge 2^{-(1+m)}$,
    we obtain
    \[\kk(G) \le q^{\left(\tfrac{1}{2}\left( 1-2^{-(1+m)} \right)^2+\beta_m\right)n^2(1+o_m(1))} \le q^{\gamma_mn^2(1+o_m(1))},\]
    where $\gamma_m=\frac{1}{6}-\frac{13}{24}\cdot 4^{-m}+2^{-(m+1)}-4^{-(m+1)}m$.
\end{proof}

\section{Final Remarks}
\subsection{}
The original motivation for determining asymptotic bounds on $\kk(U_n)$ was that such bounds could be used to give upper bounds on the number of $p$-groups of a given order.
Specifically, Higman used his upper bound of $\kk(U_n(p))\le p^{\frac{n^2}{4}}$ to show that there were no more than $p^{\frac{2n^3}{15}(1+o(1))}$ groups of order $p^n$ \cite{higman1960enum1}.
In the same paper, he provided the lower bound of $p^{\frac{2}{27}n^3(1+o(1))}$ for the number of groups of order $p^n$, by explicitly exhibiting so many $p$-groups of class 2, but was unable to close the gap between the constants $2/27$ and $2/15$.
He noted that a reduction in the upper bound for $\kk(U_n(q))$ would result in a reduction for the upper bound on the number of $p$-groups, but that even proving $\kk(U_n(q))\le q^{\frac{n^2}{12}(1+o(1))}$ could not completely close the gap with his methods.

In 1965, Charles Sims used a different argument to show that the number of groups of order $p^n$ is $p^{\frac{2n^3}{27}+O(n^{8/3})}$, resolving Higman's original question~\cite{sims1965enumerating} (see also, \cite{blackburn2007enumeration}).

\subsection{}
Several other people have studied conjugacy in $U_n$ since Higman first introduced the question.
Gudivok et al.~provide an algorithm to compute the number of conjugacy classes in $U_n$ for arbitrary $q$, and computed the number for $n\le 7$~\cite{gudivok1990classes}.\footnote{The paper actually computed the number of conjugacy classes through $n=9$, though the implementation had an error which yielded the wrong values for $n=8$ and $9$.
The polynomials provided do evaluate to the correct values for $\kk(U_n(2))$.}
Vera-L\'opez and Arregi constructed a different algorithm and were able to compute $\kk(U_n)$ for arbitrary $q$ and $n\le 13$ \cite{vla2003conjugacy}.
Moreover, they verified that for $n\le13$, the quantity $\kk(U_n)$ was a polynomial in $q$ with integer coefficients of degree~$[(n^2+6n)/12]$, where $[\cdot]$ denotes the nearest integer function.
In \cite{paksoffer2015}, we verify that $\kk(U_n)$ is a polynomial in $q$ with integer coefficients of the same degree for~$n\le16$.
While there is evidence that $\kk(U_n)$ may not be a polynomial in $q$ for large~$n$ (see \cite{halasi2011number,paksoffer2015}), we believe that Higman's lower bound is asymptotically tight: 
\begin{conj}
    For every prime power $q$, \[\kk(U_n)=q^{\frac{n^2}{12}(1+o(1))}.\]
\end{conj}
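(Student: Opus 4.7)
The conjecture asserts that Higman's 1960 lower bound is asymptotically tight, closing a gap that has remained open for over sixty years. My plan is to push the gap-array framework of Sections~\ref{sec:gap} and~\ref{sec:proof} to its natural limit. The heuristic target is to show
\[\abs{\comm(U_n)} \le q^{\frac{n^2}{2}+\frac{n^2}{12}+o(n^2)},\]
since dividing by $\abs{U_n}=q^{\binom{n}{2}}$ yields the conjectured upper bound on $\kk(U_n)$, and the matching lower bound is already due to Higman.

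First I would sharpen Theorem~\ref{thm:worst_gap}. The current statement replaces $X_A C_\U(A) X_A^{-1}$ by a single worst-case space $C(\G^\lambda)$ depending only on $\shape(A)=\lambda$. This discards information: the actual space depends on the full flag of shapes $\shape(A|_1),\shape(A|_2),\dots,\shape(A|_n)$ produced by the inductive conjugation procedure of Section~\ref{sec:jordan_forms_and_conj}. I would stratify $\U_n$ by this sequence of shapes, produce for each stratum a sharper gap array by iterating $\psi_r$ along the prescribed flag, and derive a refined analogue of Yip's enumeration~\eqref{eq:yip} counting matrices whose successive restrictions realize a given sequence. The resulting bound on $\abs{\comm(\lambda)}$ becomes a weighted sum over flags that one then optimizes.

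Second, I would carry out that optimization in the spirit of Lemma~\ref{lem:asymptotic_technicalities}, but tracking the entire sequence of partitions $(\lambda^{(1)},\lambda^{(2)},\dots,\lambda^{(n)}=\lambda)$ instead of just $\lambda'$. The extremal flag should correspond precisely to the matrices appearing in Higman's lower-bound construction (supported in a thick band near the antidiagonal), and one would verify that the optimal exponent on $q$ in the resulting bound for $\abs{\comm(U_n)}$ is exactly $\tfrac{7n^2}{12}$.

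The main obstacle, and the reason the conjecture has resisted attack since 1960, is that any purely one-sided estimate on the centralizer dimension $\dim C_\U(A)$ appears to saturate well above the target exponent. In the notation of Theorem~\ref{thm:h_bound} there really do exist $\lambda$ for which $h(\lambda')/2$ attains values up to $n^2/6$, so no refinement in the choice of $A$ alone can succeed. Closing the gap therefore requires simultaneous control of \emph{both} matrices in a commuting pair, beyond the symmetric use already exploited in Theorem~\ref{thm:better_conj_bound}: even when a particular $A$ admits a large centralizer, most elements $B$ of that centralizer must themselves lie in strata whose Jordan shape is constrained by $A$, so the ``bad'' cases cannot all be realized simultaneously. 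Making this conditional, pair-level constraint rigorous, perhaps by analyzing the irreducible components of the commuting variety of $\U_n$ and bounding their dimensions individually rather than via a product bound, is where I expect the real difficulty to lie and where a successful proof would need to introduce ideas genuinely beyond the combinatorics of gap arrays.
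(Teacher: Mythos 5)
This statement is an open conjecture in the paper: the author offers no proof of it, and the paper's actual results (Theorem~\ref{thm:conj_bound}, Corollary~\ref{cor:sixth}, Theorem~\ref{thm:lower_central_bound}) only reach exponents around $\tfrac{7}{44}n^2\approx 0.159n^2$, far above the conjectured $\tfrac{1}{12}n^2\approx 0.083n^2$. Your proposal is a research program, not a proof, and you say as much in your final paragraph; so the verdict has to be that there is a genuine gap --- indeed, the entire proof is missing. Concretely: (i) the claim that stratifying by the flag $\shape(A|_1),\dots,\shape(A|_n)$ and optimizing over flags would yield an exponent of exactly $\tfrac{7}{12}n^2$ for $\abs{\comm(U_n)}$ is asserted, not derived, and nothing in the paper's machinery supports that the optimum lands there rather than somewhere between $\tfrac{7}{12}n^2$ and the paper's $\alpha n^2\approx 0.659n^2$; (ii) the ``pair-level constraint'' you invoke --- that when $C_\U(A)$ is large, most $B$ in it lie in shapes that are themselves heavily constrained --- is exactly the missing idea, and you give no mechanism for making it quantitative.

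That said, your diagnosis of \emph{why} one-sided bounds fail is consistent with the paper's own final remarks: the author notes that the gap-array bound is maximized on partitions $\lambda$ with at most two columns, yet for precisely those shapes one can show $X_AC_\U(A)X_A^{-1}=C(G)$ exactly and $\abs{\comm(\lambda)}=q^{o(n^2)}$, so the extremal cases of the upper bound are not the extremal cases of the truth. This is the same tension you identify with $h(\lambda')$ saturating at $n^2/6$. So your proposal correctly locates the obstruction, but locating an obstruction is not overcoming it; as written, nothing here proves the conjecture, and no proof exists in the paper to compare against.
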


\subsection{} The proof of Corollary \ref{cor:sixth} shows that our bound is maximized for partitions with at most two columns.
However, a straightforward inductive argument proves that if $A$ has Jordan form $J_\lambda$, and $\lambda$ has at most two columns, then there exists a gap array $G$ such that $X_AC_\U(A)X_A^{-1}=C(G)$.
Moreover, we can explicitly compute the size of this gap array, and determine that $\comm(\lambda)=q^{o(n^2)}$.

\subsection{} Theorem \ref{thm:lower_central_bound} provides another proof that $\kk(U_n)\le q^{\frac{n^2}{6}(1+o(1))}$.
Taking $m$ to infinity, we see that $\gamma_m$ approaches $\tfrac{1}{6}$.
To make this precise, one must also show that the the term $o_m(1)$ does not grow too fast with $m$.
In \cite{soffer2016thesis}, we do this explicitly.

\subsection{} A \emph{pattern group} is a subgroup of $U_n$ defined by prescribing cells which are allowed to be non-zero (see \cite{isaacs2007counting}).
The normal pattern groups in $U_n$ were characterized by Weir, and have the property that if a cell is allowed to be non-zero, then so is any cell to the right of it or above it \cite{weir1955sylow}.
Due to this combinatorial structure we believe that similar techniques can be used to bound the number of conjugacy classes.

\subsection{}
Regarding the bounds on $\kk(U_{n,k})$, Lemma \ref{lem:no_comm} has room for improvement.
Specifically, when we wish to bound the number of matrices in $U_{n,k}$ which have Jordan type $\lambda$, we use $F_\lambda$, the number of such matrices in $U_{n}$.
If we had estimates on the number of matrices in $U_{n,k}$ of a particular Jordan type, we could tighten the upper bound on~$\kk(U_{n,k})$.
We hope to return to this problem in the future.

\section{Acknowledgements}
I would like to thank my Ph.D. advisor, Igor Pak, for introducing me to this problem, and for his help with early revisions of this paper.
I would like to specially thank Persi Diaconis and Eric Marberg for pointing us to \cite{marberg2008constructing}, and Antonio Vera-L\'opez for directing us to \cite{vla1992conjugacy}.
Their helpful comments and discussions were invaluable.
I am also grateful to Adam Azzam and Alejandro Morales for their many helpful remarks, both mathematical and expository.

\end{document}